\author{Daniel Appel\"{o}\thanks{
Department of Computational Mathematics, Science \& Engineering and 
Department of Mathematics, Michigan State University, East Lansing MI 48824, USA. \email{appeloda@msu.edu}} 
\and Lu Zhang\thanks{Department of Applied Physics and Applied Mathematics, Columbia University, New York, NY 10027, USA.   \email{lz2784@columbia.edu}}
\and Thomas Hagstrom\thanks{Department of Mathematics, Southern Methodist University, Dallas, TX 75275, USA. \email{thagstrom@smu.edu}} 
\and Fengyan Li\thanks{Department of Mathematical Sciences, Rensselaer Polytechnic Institute, Troy, NY 12180, USA. \email{lif@rpi.edu}}}
\newcommand{\ba}{\begin{array}}
\newcommand{\ea}{\end{array}}
\newcommand{\be}{\begin{equation}}
\newcommand{\ee}{\end{equation}}
\newcommand{\ben}{\begin{equation*}}
\newcommand{\een}{\end{equation*}}
\newcommand{\bd}{\begin{displaymath}}
\newcommand{\ed}{\end{displaymath}}
\newcommand{\bi}{\begin{itemize}}
\newcommand{\ei}{\end{itemize}}
\newcommand{\bn}{\begin{enumerate}}
\newcommand{\en}{\end{enumerate}}
\newcommand{\pa}{\partial}
\newcommand{\f}{\frac}
\newcommand{\ci}{\cite}
\newtheorem{truth}{Theorem}
\newcommand{\mH}{\mathcal H}
\newcommand{\mG}{\mathcal G}
\newcommand{\mL}{\mathcal L}
\newcommand{\dbl}{[[}
\newcommand{\dbr}{]]}
\newcommand{\dsl}{\{\{}
\newcommand{\dsr}{\}\}}
\journalname{BIT}
\begin{document}

\title{An Energy-Based Discontinuous Galerkin Method with Tame CFL Numbers for the Wave Equation\thanks{This work was partially supported by NSF Grants DMS-1913076, DMS-2012296, DMS-1719942 and DMS-1913072. Any opinions, findings, conclusions or recommendations expressed in this material are those of the authors and do not necessarily reflect the views of the National Science Foundation.}}

\titlerunning{Energy-Based DG with Tame CFL}        

\author{Daniel Appel\"{o}         \and
        Lu Zhang \and
        Thomas Hagstrom \and
        Fengyan Li
}


\institute{Daniel Appel\"{o} \at 
    Department of Computational Mathematics, Science \& Engineering and 
    Department of Mathematics, Michigan State University, East Lansing MI 48824, USA. \email{appeloda@msu.edu} \and
    Lu Zhang \at 
    Department of Applied Physics and Applied Mathematics, Columbia University, New York, NY 10027, USA.   \email{lz2784@columbia.edu}
    \and
    Thomas Hagstrom
    \at Department of Mathematics, Southern Methodist University, Dallas, TX 75275, USA. \email{thagstrom@smu.edu} \and 
    Fengyan Li \at Department of Mathematical Sciences, Rensselaer Polytechnic Institute, Troy, NY 12180, USA. \email{lif@rpi.edu}
    }

\date{Received: date / Accepted: date}

\maketitle

\begin{abstract}
We extend and analyze the energy-based discontinuous Galerkin method for second order wave equations on staggered and structured meshes. By combining spatial staggering with local time-stepping near boundaries, the method overcomes the typical numerical stiffness associated with high order piecewise polynomial approximations. In one space dimension with periodic boundary conditions and suitably chosen numerical fluxes, we prove bounds on the spatial operators that establish stability for CFL numbers $c \f {\Delta t}{h} < C$ independent of order when stability-enhanced explicit time-stepping schemes of matching order are used. For problems on bounded domains and in higher dimensions we demonstrate numerically that one can march explicitly with large time steps at high order temporal and spatial accuracy.  
\keywords{discontinuous Galerkin \and wave equation \and staggered mesh }
\subclass{65M12 \and 65M60}
\end{abstract}

\section{Introduction}

Discontinuous Galerkin methods \cite{cockburn1989tvb,HesthavenWarburton02} have emerged as one of the most popular discretization techniques for simulating physical and engineering phenomena including various linear and nonlinear wave models. Discontinuous Galerkin methods 
have excellent dispersive properties, are geometrically flexible, do not have a global mass matrix, can be implemented at any order of accuracy, and, being Galerkin methods, have robust stability properties.    

Although discontinuous Galerkin methods are spectrally convergent with the order $q$ of the approximation, very high order methods, say $q>6$, are seldom used in practice. The primary reason for this is that the spectral radius of the discrete spatial derivative operator grows as $q^2/h$, where $h$ is an element length scale. This rapidly growing numerical stiffness forces the use of excessively small time steps, effectively prohibiting the use of very high order methods. The source of this numerical stiffness is the approximation by polynomials which must be sampled throughout an element. Heuristically this can be understood by comparing a wave $w=e^{i q x}$ and its $q$ times larger derivative $w_x = iq w$ with a typical orthogonal polynomial, say a Chebyshev polynomial, $T_q(x) =\cos (q \cos^{-1}(x))$ and its derivative $T_q'(x)$. Clearly $| T_q(x) | \le 1,$ for $ |x| \le 1$ and for all $q$, as for the wave, but the derivative $|T_q'(\pm 1)| = q^2$, is $q$ times larger at the edges. 

This numerical stiffness is particularly troublesome for linear wave propagation problems where solutions typically remain smooth throughout the computation and thus favor very high order spatial discretizations. Fortunately this numerical stiffness can be circumvented in several ways, for example by allowing the polynomial approximations to the solution to spread out over many elements as in the traditional finite difference methods or as in  the more recent Galerkin difference methods \cite{BANKS2016310}, or by  only sampling the derivatives at the cell center as in Hermite methods \cite{secondHermite}. 

It is also possible to remove this numerical stiffness within the discontinuous Galerkin framework either by co-volume filtering as proposed in \cite{TAMECFL} or by carrying two approximate solutions on staggered grids as in central discontinuous Galerkin methods \cite{liu20082}. Central discontinuous Galerkin methods combine features of discontinuous Galerkin methods and central schemes \cite{nessyahu1990non}, and they are shown  in \cite{liu20082}, via Fourier analysis, to allow a larger time step size than standard upwind discontinuous Galerkin methods when applied to the linear advection equation. In \cite{fli} these results were established quantitatively for upwind discontinuous Galerkin methods and central discontinuous Galerkin methods by estimating the dependence of the operator bound of the respective spatial discretizations on the approximation order. A serious drawback with the co-volume approach \ci{TAMECFL} and the central discontinuous Galerkin approach \ci{liu20082} is that they must carry two copies of the solution hence with memory usage and  computational cost per right hand side evaluation doubled.  

In this paper we present an alternative method that can be time-marched at {\bf very high order of accuracy and with an explicit time discretization and $\mathcal{O}(1)$ CFL}. Our method is a staggered version of the energy-based discontinuous Galerkin method \cite{Upwind2}. Our method does not require any additional copies of the solution vector and thus has the same  
memory cost as the original method in \ci{Upwind2} but can take much larger timesteps. 

We prove in one dimension and with periodic boundary conditions that this staggered energy-based discontinuous Galerkin method results in a semi-discrete-in-space operator whose norm  grows {\em linearly} with the order of the method. Precisely, in Theorem \ref{thm:staggerbound} we establish a bound for the spatial operator $\mathcal{L}_c$:
\begin{displaymath}
\| \mathcal{L}_c \| \leq C \frac {q}{h} ,
\end{displaymath}
where $h$ is the element width and $q$ is the polynomial degree. 
This, in combination with the  Kreiss-Wu theory \cite{kreiss1993stability}, indicates  that  the Courant-Friedrich-Levy (CFL) number is constant independent of order of accuracy as long as high order locally stable time stepping methods with large stability domains are applied. Such time-stepping methods can be constructed at arbitrary order by adding additional stages to enhance the stability of standard methods; see, for example, \cite{JolyRodriguezLeapFrog} where stability-enhanced leap-frog schemes are proven to exist at all orders and optimized at orders up to sixteen. 

At physical boundaries it is no longer possible to stagger the mesh and the CFL constraint becomes order dependent again. As long as the bulk of the problem can be meshed by a rectilinear mesh this can easily be remedied in any dimension by the use of local timestepping in elements near the boundary. Here we use the local timestepping methods of Diaz and Grote \cite{diaz2009energy} and show in numerical experiments in one and two dimensions that this approach allows us to retain the large time steps in the interior. The resulting method, while having some additional computational overhead near boundaries, asymptotically has the same computational complexity as the staggered method for the periodic case. 

The two dimensional examples we consider below are proof-of-concept computations in square geometries but we emphasize that a more sophisticated (than the one we have used for the results in this paper) implementation could be very efficient for meshes of the type that is displayed in Figure  \ref{fig:hybrid_mesh}, and that extensions to three spatial dimensions are straightforward. An example of problems of this type is the simulation of underwater acoustics with bathymetry.  

The rest of the paper is organized as follows. In section \ref{sec:method} we review the formulation of energy-based DG methods for the scalar wave equation and extend them to staggered,
structured meshes. In section \ref{sec:operatorbounds} we establish bounds on the norm of the spatial operator in one space dimension and with periodic boundary conditions. In section \ref{sec:timestepping} we briefly discuss our time-stepping schemes and the corrections needed to maintain large time steps in the presence of boundaries. Lastly, in section \ref{sec:experiment} we demonstrate the accuracy and stability of the method in one and two space dimensions by means of numerical experiments. 

\begin{figure}[tb]
	\begin{center} 
		\includegraphics[width=0.95\textwidth]{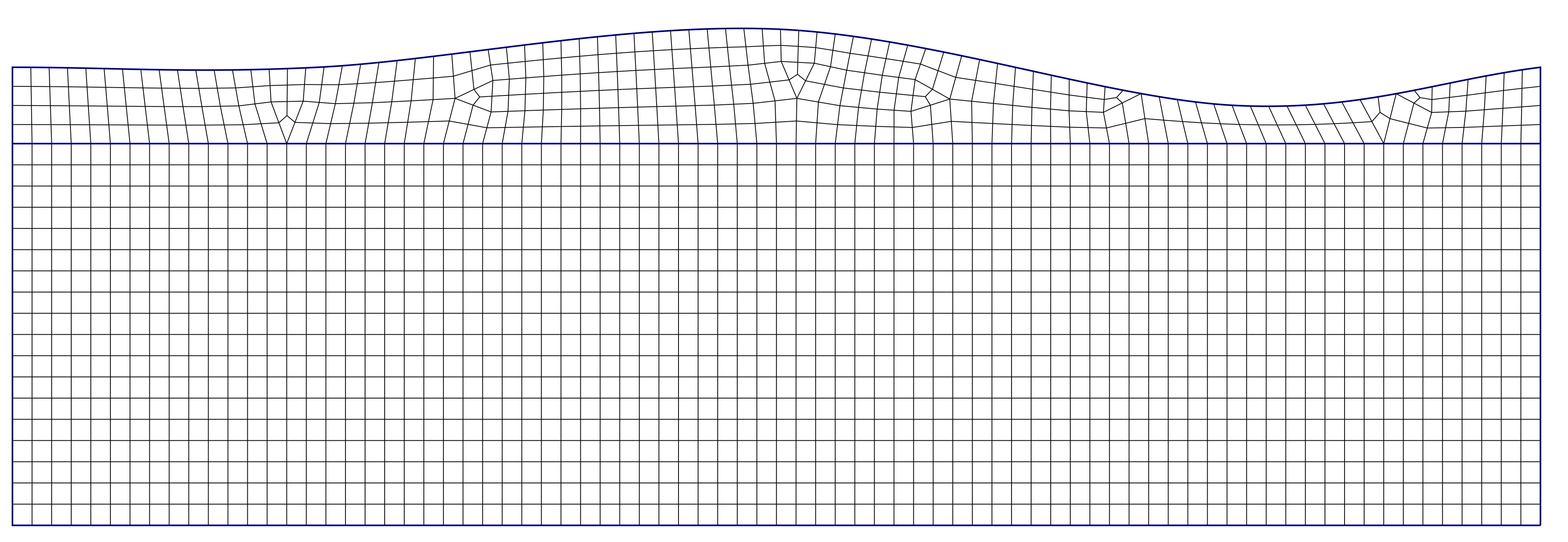}
\caption{The method presented here can handle problems on meshes as the one above. \label{fig:hybrid_mesh}}
	\end{center}
\end{figure}

\section{Energy-Based Discontinuous Galerkin Method for the Wave Equation} \label{sec:method}
We consider the scalar wave equation written as a first order system in time
\begin{eqnarray}
&&\f{\pa u(x,t)}{\pa t} = v(x,t), \label{eq:second1} \\
&& \f{\pa v(x,t)}{\pa t}= \nabla \cdot (c^2(x) \nabla u(x,t)) + f(x,t),  \label{eq:second2}
\end{eqnarray}
on the domain 
\[
x = (x_1,\ldots,x_d) \in \Omega \subset \mathbb{R}^d, \, t>0,
\]
with initial conditions  
\be
u(x,0) = g(x), \ \ \f{\pa u}{\pa t }(x,0) = v(x,0) = h(x),
\ee
and boundary conditions  
\be
\gamma \f{\pa u}{\pa t } + \kappa c \, \vec{n} \cdot \nabla u  = 0,  \ \ 
x
\in \pa \Omega.  \label{eq:bc}
\ee
Here $c=c(x)$ is the speed of sound and $\vec{n}$ is the outward pointing unit normal. For the boundary conditions we assume the normalization $\gamma^2+\kappa^2 = 1$ and that $\gamma,\kappa \ge 0$. Then the choice $\kappa = 0$ corresponds to a homogeneous Dirichlet boundary condition on {$\f{\pa u}{\pa t }$}  
and  $\gamma = 0$ corresponds to a homogeneous Neumann boundary condition.  Any choice with $\gamma \kappa$ being  positive will dissipate the energy of the system and can be thought of as a low order non-reflecting boundary condition. 

The energy associated with the scalar wave equation is 
\begin{equation}
\mathcal{E}(t) = \frac{1}{2} \int_\Omega \left(\frac{\pa u}{\pa t}\right)^2 + c^2(x) |\nabla u|^2  {dx,}
\end{equation}
and it is a discrete version of this energy that our energy-based discontinuous Galerkin method is built from.   

We now present the non-staggered and staggered formulations of the method. A more thorough analysis of the non-staggered method can be found in \cite{Upwind2}, but we include it here to illustrate the differences between the two formulations. The essential new idea in the energy-based method is to enforce equation (\ref{eq:second1}) {\em weakly} with a nonstandard test function; see equations (\ref{var1}) and (\ref{var1_s}) below. With this choice we can establish energy estimates without the need for mesh-dependent penalty parameters. 

\subsection{Non-staggered Formulation}
Let the finite element mesh, $\Omega^h=\{ \Omega_j \}$, with 
\[
{\Omega} = \bigcup_j {\Omega}_j,
\]
be a discretization of $\Omega$ consisting of geometry-conforming and non-overlapping  (possibly curved) elements with piecewise smooth boundaries.  

On each element ${\Omega}_j$, the approximation to the displacement, $u^h$, and the approximation to the velocity, $v^h$, are elements of some finite dimensional spaces $U^h$ and $V^h$ respectively. Then, the non-staggered energy-based discontinuous Galerkin method can be stated as follows. On each element ${\Omega}_j$, require that for all test functions 
\[
\phi \in U^h, \ \  \psi \in V^h,
\]
the following variational formulation holds: 
\begin{subequations}
\begin{eqnarray}
 \int_{{\Omega}_j} c^2 \nabla \phi \cdot \left( \f {\pa \nabla u^h}{\pa t} - \nabla v^h \right){dx}  &=& 
\int_{\pa {\Omega}_j} (c^2 \nabla \phi \cdot \vec{n}) 
\left( v^{\ast} -  v^h \right) ds,  \label{var1} \\
 \int_{{\Omega}_j}  \psi  \f {\pa v^h}{\pa t} + \nabla \psi \cdot (c^2\nabla u^h) - \psi f dx   &=&
\int_{\pa {\Omega}_j} \psi ( (c^2 \nabla u ) \cdot \vec{n})^{\ast}ds.  \label{var2}
\end{eqnarray}
\end{subequations}
As described in \cite{Upwind2} the energy is invariant to constants and this necessitates an additional equation complementing \eqref{var1}
\begin{equation}
 \int_{{\Omega}_j}  \left( \f {\pa  u^h}{\pa t} -  v^h\right) dx = 0,\quad \forall j.  \label{extra}
\end{equation}
Here $v^\ast$ and $( (c^2 \nabla u ) \cdot \vec{n})^{\ast}$ are numerical fluxes computed from the averages and jumps of function values and derivatives. Arbitrarily labeling values from adjacent elements $1$ and $2$ we recall the standard notation:
\begin{eqnarray}
\dsl v^h \dsr_{\alpha} & = & \f {1}{2} \left( \alpha v^{h,1} + (1- \alpha) v^{h,2} \right) \nonumber \\ \dbl v^h \dbr & = & v^{h,1} \vec{n}^1 + v^{h,2} \vec{n}^2 , \label{jav} \\
\dsl c^2 \nabla u^h \dsr_{\alpha} & = & \f {1}{2} \left( \alpha c^2 \nabla u^{h,1} + (1- \alpha) c^2 \nabla u^{h,2} \right), \nonumber \\ \dbl c^2 \nabla u^h \dbr & = & c^2 \nabla u^{h,1} \cdot \vec{n}^1 + c^2 \nabla u^{h,2} \cdot \vec{n}^2 . \label{jagu}
\end{eqnarray}
We then set
\begin{equation}
v^{\ast}=\dsl v^h \dsr_{\alpha} - \beta \dbl c^2 \nabla u^h \dbr , \label{vdiamond}    
\end{equation}
\begin{equation}
\left( c^2 \nabla u \cdot \vec{n} \right)^{\ast} = \dsl c^2 \nabla u^h \dsr_{1-\alpha}  \cdot \vec{n} - \tau \dbl v^h \dbr \cdot \vec{n} . \label{wstar} 
\end{equation} 
Here $\beta \geq 0$ is an upwinding parameter with units of $c^{-1}$ and $\tau \geq 0$ is an upwinding parameter with units of $c$. 
When $\beta=\tau=0$, one can recover the commonly used central fluxes by choosing $\alpha=1/2$, and alternating fluxes with $\alpha=0$ or $1$.

\subsection{Staggered Formulation}

We now consider two structured finite element meshes, $\Omega^h=\{ \Omega_j \}$ and $\Omega^{\diamond,h}=\{ \Omega_k^{\diamond} \}$ 
\[
\Omega = \bigcup_j\Omega_j  = \bigcup_k\Omega_k^\diamond.
\]
We assume each mesh consists of geometry-conforming and non-overlapping (possibly curved) quadrilaterals (or hexahedra) with piecewise smooth boundaries. We assume that the meshes are staggered. More precisely, away from non-periodic boundaries we assume that all quadrilaterals (hexahedra) are straight sided and convex and that all vertices have valence 4 (6). By staggering we mean that, away from boundaries, the vertices of the mesh $\Omega^{\diamond,h}$ coincide with the centers (defined as the vertex, side or area/volume centroid) of the elements in $\Omega^h$. 

For consistency with the theoretical and computational results to follow, we take the approximation to the velocity, $v^h$, restricted to an element ${\Omega}^\diamond_k$ in $\Omega^{\diamond,h}$, to be a tensor product polynomial in $(\mathbb{Q}^{q_v} ({\Omega}^\diamond_k))^d$ while the approximation to the displacement, $u^h$, restricted to an element ${\Omega}_j$ in $\Omega^h$, is taken to be a tensor product polynomial in  $(\mathbb{Q}^{q_u} ({\Omega}_j))^d$. Here $q_u\in\mathbb{N}$, $q_v\in\mathbb{N}\cup\{0\}$.

The staggered energy-based discontinuous Galerkin method then can be stated as follows. On each element $\Omega_j$ and $\Omega_k^\diamond$, require that for all test functions 
\[
\phi \in (\mathbb{Q}^{q_u} ({\Omega}_j) )^d, \ \  \psi \in (\mathbb{Q}^{q_v} ({\Omega}_k^\diamond))^d,
\]
the following variational formulation holds: 
\begin{subequations}
\begin{eqnarray}
	\int_{{\Omega}_j} \left( c^2 \nabla \phi \cdot \frac {\pa \nabla u^h}{\pa t} +  \nabla \cdot (c^2 \nabla \phi ) v^h \right) dx  &=&
	\int_{\pa {\Omega}_j} (c^2 \nabla \phi \cdot \vec{n}) 
	v^{\ast} ds,  \label{var1_s} \\
	\int_{{\Omega}_k^\diamond}  \psi  \frac {\pa v^h}{\pa t} + \nabla \psi \cdot (c^2 \nabla u^h) - \psi f dx  &=&
	\int_{\pa {\Omega}_k^\diamond} \psi ( (c^2 \nabla u ) \cdot {\bf n})^{\ast}ds.  \label{var2_s}
\end{eqnarray}
\end{subequations}
As with the non-staggered formulation we must complement \eqref{var1_s} with the equation
\begin{equation}
\int_{{\Omega}_j}   \left(\frac {\pa  u^h}{\pa t} -  v^h \right) dx = 0,\quad \forall j.  \label{extra_s}
\end{equation}
Again, here  $v^\ast$ and $( (c^2 \nabla u ) \cdot \vec{n})^{\ast}$ are numerical fluxes as in (\ref{vdiamond})-(\ref{wstar}). However, taking account of the staggering, we note that $v^h$ is single valued at $\pa \Omega_j$ and $c^2 \nabla u$ is single valued at $\pa \Omega_k^{\diamond}$ so the choice of $\alpha$ is not relevant. Lastly we note that the integrals of gradients in the variational form as well as in the calculations below are understood to be piecewise-defined in subdomains where the functions are smooth. For example, the integral in (\ref{var2_s}) includes boundaries of elements in $\Omega^h$ across which $u^h$ is discontinuous. We do not, here, interpret $\nabla u^h$ in a distributional sense and so no additional boundary terms are implied. 

We note the difference between (\ref{var1}) and (\ref{var1_s}). If
the term $\nabla \cdot (c^2 \nabla \phi) v^h$ is integrated by parts in (\ref{var1_s}), terms involving the jump in $v^h$ across boundaries of dual mesh elements will appear. These play a role in the energy estimate we now derive. For the subsequent analysis we set $f=0$ for simplicity as the source function plays no role in determining time step stability constraints.

Define the discrete energy to be
\begin{equation}
\mathcal{E}^h (t) = \f {1}{2} \sum_k \int_{\Omega_k^{\diamond}} \left( v^h \right)^2\ dx + \f {1}{2} \sum_j \int_{\Omega_j} c^2 \arrowvert \nabla u^h \arrowvert^2\ dx . \label{energyh} 
\end{equation}
To start, we assume periodic boundary conditions. 
Choosing $\phi = u^h$ in \eqref{var1_s}, integrating by parts, and using the fact that $v^h$ is single valued on $\pa \Omega_j$ we find
\begin{eqnarray*}
\f {1}{2} \f {d}{dt} \sum_j \int_{\Omega_j} c^2 \arrowvert \nabla u^h \arrowvert^2\ dx & = & \sum_j \int_{\Omega_j} c^2 \nabla u^h \cdot \nabla v^h \ dx - \sum_k \int_{\pa \Omega_k^\diamond} c^2 \nabla u^h \cdot \dbl v^h \dbr \ ds \\ & & - \beta \sum_j \int_{\pa \Omega_j} \dbl c^2 \nabla u^h \dbr^2 \ ds, 
\end{eqnarray*}
Similarly we find
\begin{eqnarray*}
\f {1}{2} \f {d}{dt} \sum_k \int_{\Omega_k^{\diamond}} \left( v^h \right)^2\ dx & = & - \sum_k \int_{\Omega_k^{\diamond}} c^2 \nabla u^h \cdot \nabla v^h \ dx + \sum_k \int_{\pa \Omega_k^{\diamond}} c^2 \nabla u^h \cdot \dbl v^h \dbr \ ds\\ & & - \tau \sum_k \int_{\pa \Omega_k^{\diamond}} \arrowvert \dbl v^h \dbr \arrowvert^2 \ ds.
\end{eqnarray*}
Summing these equations, we see that the left-hand side is the time derivative of the discrete energy. Since $\bigcup_j \Omega_j = \bigcup_k \Omega_k^{\diamond}$ and recalling the piecewise definition of the integrals we conclude that the terms involving $c^2 \nabla u^h \cdot \nabla v^h$ cancel. Thus we conclude:
\begin{equation}
\f {d \mathcal{E}^h}{dt} = - \beta \sum_j \int_{\pa \Omega_j} \dbl c^2 \nabla u^h \dbr^2 \ ds
- \tau \sum_k \int_{\pa \Omega_k^{\diamond}} \arrowvert \dbl v^h \dbr \arrowvert^2 \ ds. \label{Energyest}
\end{equation}

At nonperiodic boundaries we alter the staggered mesh so that elements from both $\Omega^h$ and $\Omega^{\diamond,h}$ conform to $\pa \Omega$. Now the imposition of the boundary conditions is the same as for the non-staggered formulation. For example, recalling (\ref{eq:bc}) we may set
\begin{eqnarray}
v^{\ast}  & = & \f {\kappa}{\gamma + \kappa} \left( v^h - c \nabla u^h \cdot \vec{n} \right), \label{vdiamondbc} \\
\left( c^2 \nabla u^h \cdot \vec{n} \right)^{\ast} & = & \f {\gamma}{\gamma + \kappa} \left( c^2 \nabla u^h \cdot \vec{n} - c v^h \right) . \label{wstarbc} 
\end{eqnarray}
Then the contribution of the nonperiodic boundaries to the energy derivative can be shown to be nonpositive. The mesh modification at these boundaries will preclude taking global large time steps. To maintain the efficiency of the staggered scheme we will then use local time stepping in the vicinity of the boundaries; see section \ref{sec:timestepping} for details.

\section{Operator Bounds on Periodic Domains in One Space Dimension}\label{sec:operatorbounds}

In this section we use the techniques from \cite{TAMECFL,fli} to establish bounds for the energy-based DG and the staggered energy-based DG spatial operator for the second-order wave equation (\ref{wave_1d_u}) and (\ref{wave_1d_v}) in one space dimension. This allows us to invoke the Kreiss-Wu theory  \cite{kreiss1993stability} combined with the energy estimates to establish the stability of fully discrete locally stable explicit time-stepping schemes. 

We restrict the analysis to uniform grids, periodic boundary conditions, and constant coefficients. As the key ingredient to taming the CFL condition is to evaluate certain terms with derivatives only  \emph{near} the element centers,  we expect that the analysis can be extended to smoothly varying grids and to variable coefficients. The numerical experiments demonstrate the efficiency of the method for a variable coefficient problem. It may also be possible to extend the analysis to problems with Dirichlet or Neumann boundary conditions by using the image principle, however we don't pursue this here.         

\subsection{Operator Bounds for the Non-Staggered Formulation}

Now, consider the one dimensional wave equation in a uniform medium  
\begin{eqnarray}
u_t &=& v,\label{wave_1d_u} \\
v_t &=& c^2 u_{xx}\label{wave_1d_v},
\end{eqnarray}
on the domain $x\in[x_{\rm L}, x_{\rm R}] \equiv \Omega$. Let the domain be discretized by a grid \mbox{$x_j = x_{\rm L} + j h,$} $j = 0,\ldots,N,\, h = (x_{\rm R}-x_{\rm L}) / N$, and $I_j=[x_j, x_{j+1}]$. Associated with the grid, we define two  broken finite element spaces
\[
U_h^{q_u} = \{ w : w|_{I_j} \in \mathbb{Q}^{q_u}(I_j)\;\forall j\}, \ \ \ \ V_h^{q_v} = \{ w : w|_{I_j} \in \mathbb{Q}^{q_v}(I_j)\;\forall j\}.
\]
Here and below $\mathbb{Q}^{q_u}(I_j)$ is the space of polynomials of degree up to $q_u$ in $I_j$, $q_u\in\mathbb{N}$, and $q_v\in\mathbb{N}\cup\{0\}$. In addition, we denote $\phi^\pm(x) = \lim_{\varepsilon \rightarrow 0\pm} \phi(x+ \varepsilon)$.

The energy-based DG scheme then consists of finding $u^h(\cdot,t) \in U_h^{q_u}$ and $v^h(\cdot,t) \in V_h^{q_v}$ such that for any $\phi \in U_h^{q_u}$ and $\psi \in V_h^{q_v}$ and for all $j$
\begin{subequations}
\label{eq:method_ns}
\begin{eqnarray}
\int_{x_j}^{x_{j+1}} c^2 \phi_x\left(\frac{\partial u^h_x}{\partial t}-v^h_x\right)dx &=& c^2 \phi_x^-(v^\ast-v^{h,-})\Big|_{x_{j+1}}- c^2 \phi_x^+(v^\ast-v^{h,+})\Big|_{x_j},\label{eq:method_ns1} \\
\int_{x_j}^{x_{j+1}}\psi\frac{\partial v^h}{\partial t} + c^2 \psi_x u^h_x dx &=& c^2 \psi^- u_x^\ast \Big|_{x_{j+1}}- c^2 \psi^+ u_x^\ast \Big|_{x_j},\label{eq:method_ns2}\\
\int_{x_j}^{x_{j+1}} \frac{\partial u^h}{\partial t}-v^h dx&=&0.\label{eq:method_ns3}
\end{eqnarray}
\end{subequations}

Assuming periodic boundary conditions we may add up the equations \eqref{eq:method_ns1}-\eqref{eq:method_ns2} in $j$ to find
\begin{align}
\int_{\Omega} c^2 \phi_x\frac{\partial u^h_x}{\partial t} + \psi\frac{\partial v^h}{\partial t} dx =\int_\Omega
c^2 \phi_x v^h_x - c^2 \psi_x u^h_x dx \hspace{1.0in} \label{eq:var_energy1} \\ 
+ \sum_j c^2 \left(
\phi_x^-(v^\ast-v^{h,-})\Big|_{x_{j+1}}- \phi_x^+(v^\ast-v^{h,+})\Big|_{x_j}
+\psi^- u_x^\ast \Big|_{x_{j+1}}-\psi^+ u_x^\ast \Big|_{x_j}\right).
\notag
\end{align}
Throughout, the spatial derivative of functions in any broken finite element space shall be understood  as being defined element by element.
To connect the element solutions in a stable fashion we use the numerical fluxes defined in (\ref{vdiamond})-(\ref{wstar}) and introduce the notation
 \begin{subequations}
\label{eq:sflux:L}
\begin{eqnarray}
v^\ast &=& \mathcal{H} ({v}^h, u_x^h)=   
\{\{v^h\}\}_\alpha  - \beta c^2 (u_{x}^{h,-} - u_{x}^{h,+}), \label{eq:sflux1:L} \\
u_{x}^\ast &=& \mathcal{G}(u_x^h, v^h)= \{\{u_{x}^{h}\}\}_{1-\alpha}  - \frac {\tau}{c^2} (v^{h,-}-v^{h,+}). \label{eq:sflux2:L}
\end{eqnarray}
\end{subequations}
Then the energy estimate (\ref{Energyest}) holds. We note that it can also be used to establish error estimates for different choices of $\alpha$, $\beta$ and $\tau$; see \cite{Upwind2} for details. 

We now establish bounds on the spatial operators which constrain the allowable time step sizes for explicit marching schemes. In particular we are interested in the dependence of these bounds on the approximation orders, $q_u$ and $q_v$ and will follow a similar analysis as in \cite{TAMECFL,fli}. With the choice of the numerical fluxes in (\ref{vdiamond})-(\ref{wstar}), an important observation is that the first two equations in \eqref{eq:method_ns} are coupled with \eqref{eq:method_ns3} in a one-way manner. That is, \eqref{eq:method_ns1}-\eqref{eq:method_ns2} will uniquely determine $w^h=u_x^h\in U_h^{q_u-1}$ and $v^h\in V_h^{q_v}$. Once $w^h$, $v^h$ are available, one can further recover the missing constant in $u^h$ on $[x_j, x_{j+1}]$ (i.e. in the form of the cell average of $u^h$) through \eqref{eq:method_ns3} for all $j$. As this last step is simply an integration in time it can not affect the numerical stability; see also the discussion in section \ref{sec:timestepping}. 

These considerations motivate us to define the operator
${\mathcal L}: U_h^{q_u-1}\times V_h^{q_v}\mapsto  U_h^{q_u-1}\times V_h^{q_v}$,  
\begin{eqnarray}
\int_{\Omega}\mathcal{L}(cw,v)(c\varphi,\psi)dx =&& c^2 \int_\Omega\varphi v_x - \psi_x w dx+ c^2 \sum_j \left(\psi^- \mG(w,v) \Big|_{x_{j+1}}-\psi^+ \mG(w,v) \Big|_{x_j}\right)\notag\\
&&+ c^2 \sum_j \left(
\varphi^-(\mH(v,w)-v^{-})\Big|_{x_{j+1}}- \varphi^+(\mH(v,w)-v^{+})\Big|_{x_j}\right) 
\label{eq:operator_ns}
\end{eqnarray}
for any $\varphi\in U_h^{q_u-1}$ and $ \psi\in V_h^{q_v},$
with the operator norm as
\begin{equation}
\label{eq:op:norm}
\left\|  \mathcal{L} \right\| \equiv 
\sup_{\substack{w,\varphi \in U_h^{q_u-1},\, v, \psi \in V_h^{q_v}\\
(w,v)\ne (0,0), (\varphi, \psi)\ne (0,0)}}
\frac{\int_{\Omega}\mathcal{L}(cw,v)(c\varphi,\psi)dx}{(\| cw \|_{L^2(\Omega)}^2+\| v \|_{L^2(\Omega)}^2)^{1/2}(\| c\varphi \|_{L^2(\Omega)}^2 + \| \psi \|_{L^2(\Omega)}^2)^{1/2}}.
\end{equation}

Once the bound is established for $\left\|\mL \right\|$, time step condition, $\Delta t \left\|  \mathcal{L} \right\| \le \mathcal{R}$, for method of lines discretization combined with locally stable one-step temporal methods will follow from  Kreiss-Wu theory  \cite{kreiss1993stability}.
Here $\mathcal{R}$ is defined as the radius of the largest semidisk in the closed left half complex plane contained in the stability domain of the method. 
It is well known, \cite{ketcheson2015absolute}, that one-step methods based on Taylor expansion with $q_{\rm T} = 3,4,7,8,11,12,15,16,\ldots$ terms are locally stable. For $q_{\rm T}$ of moderate size they have stability domains which grow with order. Thus, if we can establish a bound on $\left\|  \mathcal{L} \right\|$ that grows {\em linearly} in $q_u$ and $q_v$, we should expect that the fully discrete method can time-march at a CFL condition of $\mathcal{O}(1)$ when the spatial and temporal orders are matched. As the order increases this does not hold, but, as discussed in section \ref{sec:timestepping}, with the introduction of 
additional stages the size of the stability domain can be greatly increased. In what follows we will see that such a bound can be established for the staggered method with suitably chosen numerical fluxes but not for the non-staggered method. For the non-staggered method,  the bound on $\left\|  \mathcal{L} \right\|$ is quadratic in $q_u$ and $q_v$, and this will be proved next. We note that the quadratic dependence on the degrees is sharp as demonstrated numerically in \cite{Upwind2}. 

\begin{truth}
Let the energy-based DG spatial operator $\mathcal{L}$ be defined as in (\ref{eq:operator_ns}) with periodic boundary conditions and with numerical fluxes defined by
(\ref{vdiamond})-(\ref{wstar}).
 Then the following estimate holds:
 \begin{eqnarray}
 \label{bounds_dg}
\|\mL\| & \leq & C_1 \frac{c}{h}\max\{(q_u-1)^2, q_v^2\} \\
&  & + C_2 \frac{c}{h} \left(2\max\left\{c \beta q_u^2,\frac {\tau}{c} (q_v+1)^2\right\}+(|\alpha|+|1-\alpha|)(q_v+1)q_u\right) . \nonumber
\end{eqnarray}
Here $C_1\le 2 \sqrt{3}$ and $C_2 \le 2$
are two universal positive constants, independent of $q_u$, $q_v$, $h$, and $\alpha,\beta, \tau$.
\end{truth}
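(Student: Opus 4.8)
The plan is to bound the bilinear form $\int_\Omega \mathcal{L}(cw,v)(c\varphi,\psi)\,dx$ term by term and then take the supremum in (\ref{eq:op:norm}). First I would split the right-hand side of (\ref{eq:operator_ns}) into three groups: the \emph{interior} contribution $c^2\int_\Omega(\varphi v_x - \psi_x w)\,dx$, the \emph{$\mathcal{H}$-flux} terms paired with the one-sided traces of $\varphi$, and the \emph{$\mathcal{G}$-flux} terms paired with the traces of $\psi$. Using (\ref{eq:sflux1:L})--(\ref{eq:sflux2:L}), each flux further decomposes into an averaging part (carrying the weights $\alpha$ and $1-\alpha$) and a jump/penalty part (carrying $\beta c^2$ and $\tau/c^2$ respectively). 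The two sources of $q$-growth then correspond to two classical polynomial inequalities: an $L^2$ inverse (Markov-type) estimate for the interior term and a sharp boundary-trace estimate for the flux terms.

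For the interior term I would apply the inverse inequality $\|p'\|_{L^2(I_j)} \le C\,\tfrac{q^2}{h}\|p\|_{L^2(I_j)}$ for a polynomial $p$ of degree $q$ to the factor carrying the derivative --- $v_x$ with $v\in V_h^{q_v}$ and $\psi_x$ with $\psi\in V_h^{q_v}$ (or, after integrating by parts elementwise, to $\varphi$ and $w$, which have degree $q_u-1$). Summing over $j$, applying Cauchy--Schwarz both in $L^2(\Omega)$ and over the two components $(cw,v)$, $(c\varphi,\psi)$, and folding the factors of $c$ into the weighted norms of the denominator, yields the first term $C_1\tfrac{c}{h}\max\{(q_u-1)^2,q_v^2\}$. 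This quadratic dependence is unavoidable here precisely because $\varphi,\psi,v,w$ are mutually independent in the supremum, so the interior derivative cannot be cancelled --- exactly the mechanism the staggered formulation later removes.

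For the flux terms I would use the sharp trace inequality $|p(x_\star)|^2 \le \tfrac{(q+1)^2}{h}\|p\|_{L^2(I_j)}^2$ at an endpoint $x_\star$ of $I_j$ (obtained by evaluating the Legendre reproducing kernel at the endpoint). This supplies a factor $q_u$ for traces of $\varphi$ and $w=u^h_x$ (degree $q_u-1$) and a factor $q_v+1$ for traces of $\psi$ and $v$ (degree $q_v$). The $\beta$-penalty, pairing $\varphi$ against the jump of $w$, contributes $\sim c^2\beta\,q_u^2/h$ once the $c$-weights of $\varphi,w$ are absorbed into the norm; the $\tau$-penalty, pairing $\psi$ against the jump of $v$, contributes $\sim \tau(q_v+1)^2/h=\tfrac{c}{h}\cdot\tfrac{\tau}{c}(q_v+1)^2$; and the averaging parts, which pair $\varphi$ (factor $q_u$) against $v$ (factor $q_v+1$) and $\psi$ (factor $q_v+1$) against $w$ (factor $q_u$), produce the cross term $(|\alpha|+|1-\alpha|)(q_v+1)q_u/h$.

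The remaining work is assembly. Summing over nodes with a discrete Cauchy--Schwarz and invoking periodicity --- so that each element contributes exactly its two endpoint traces and each interior node is shared by two adjacent elements --- lets the per-element $L^2$ norms recombine into the global norms $(\|cw\|^2+\|v\|^2)^{1/2}$ and $(\|c\varphi\|^2+\|\psi\|^2)^{1/2}$ in the denominator. The bounded multiplicity from node-sharing is what yields the factor $2$ in front of the penalty maximum and keeps $C_1\le 2\sqrt3$, $C_2\le 2$ independent of $q_u,q_v$. Collecting the three groups and taking the supremum gives (\ref{bounds_dg}). I expect the main obstacle to be the bookkeeping of the trace and inverse constants with exactly the right powers of $q_u$ and $q_v$ --- in particular, keeping the penalty and averaging contributions separated so that the $\beta,\tau$ terms land inside a single $\max$ while the averaging term stays proportional to $(q_v+1)q_u$ --- together with arranging the node-sum Cauchy--Schwarz so that the coupled norm reassembles with order-independent constants.
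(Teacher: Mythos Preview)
Your proposal is correct and matches the paper's proof: split into volume and flux contributions, apply the Markov-type inverse inequality to the former and the sharp endpoint trace inequality to the latter, then assemble with Cauchy--Schwarz over elements and components. The only minor difference is that the paper first integrates the $\varphi v_x$ term by parts elementwise, which simultaneously moves the derivative onto $\varphi$ (producing the $(q_u-1)^2$ factor that, together with the $q_v^2$ from $\psi_x$, yields the stated $\max$) and absorbs the $-v^{h,\pm}$ pieces so that the $\mathcal{H}$-flux reduces to $\varphi^\pm\mathcal{H}$ alone.
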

\begin{proof}  Consider any $w,\varphi \in U_h^{q_u-1}$ and $v,\psi\in V_h^{q_v}$. Applying element-wise integration by parts and the triangle inequality, we have 
\begin{eqnarray}
\int_{\Omega}\mathcal{L}(cw,v)(c\varphi,\psi)dx =&& -c^2 \int_\Omega\varphi_{x} v +\psi_x w dx
+ c^2 \sum_j \left(
\varphi^-\mH(v,w)\Big|_{x_{j+1}}- \varphi^+\mH(v,w)\Big|_{x_j}\right)
\notag\\
&&+ c^2 \sum_j \left(\psi^- \mG(w,v) \Big|_{x_{j+1}}-\psi^+ \mG(w,v) \Big|_{x_j}\right)
\leq \Lambda_1+\Lambda_2+\Lambda_3,
\end{eqnarray}
with
\begin{eqnarray*}
\Lambda_1 &=& c^2 \sum_j\int_{x_j}^{x_{j+1}} \left|\varphi_{x} v \right|dx+\int_{x_j}^{x_{j+1}}\left|\psi_x w\right| dx,\\
\Lambda_2 &=& c^2 
\sum_j \left|\varphi^-\mH(v,w)\right|\Big|_{x_{j+1}}+ \left|\varphi^+\mH(v,w)\right|\Big|_{x_j},\\
\Lambda_3 &=& c^2 
\sum_j \left|\psi^- \mG(w,v)\right| \Big|_{x_{j+1}}+\left|\psi^+ \mG(w,v)\right| \Big|_{x_j}.
\end{eqnarray*}

We now bound each of the terms, starting with the volume term $\Lambda_1$. By applying the Cauchy-Schwarz inequality,
we have
\begin{equation*}
\Lambda_1 \leq c \sum_j\|c\varphi_{x}\|_{L^2(I_j)}\|v\|_{L^2(I_j)} + \|\psi_x\|_{L^2(I_j)}\| cw\|_{L^2(I_j)}.
\end{equation*}
For $\Lambda_2$ and $\Lambda_3$, we use the definitions of $\mH(v, w)$ and $\mG(w, v)$ as well as the triangle inequality, and arrive at
\begin{align*}
\Lambda_2 \leq c \sum_j& \left|c \varphi^-\right|\left(|\alpha|\left|v^{-}\right| + |1-\alpha| \left|v^{+}\right| + c \beta \left|c w^{-}\right| + c \beta \left| c w^{+}\right|\right)\Big|_{x_{j+1}}\\
+&\left|c \varphi^+\right|\left(|\alpha|\left|v^{-}\right| + |1-\alpha| \left|v^{+}\right|+  c \beta \left|c w^{-}\right| + c \beta \left| cw^{+}\right|\right)\Big|_{x_j},\\
\Lambda_3 \leq c \sum_j &\left|\psi^-\right|\left(|1-\alpha|\left|c w^{-}\right|+|\alpha| \left|c w^{+}\right|+ \frac {\tau}{c} \left|v^{-}\right|+\frac{\tau}{c}\left|v^{+}\right|\right)\Big|_{x_{j+1}} \\
+ &\left|\psi^+\right|\left(|1-\alpha|\left|c w^{-}\right|+|\alpha|\left| c w^{+}\right|+ \frac{\tau}{c} \left|v^{-}\right|+\frac{\tau}{c}\left|v^{+}\right|\right)\Big|_{x_j}.
\end{align*}

Now we recall some standard inverse inequalities for polynomials spaces \cite{fli}; there exist positive constants $\hat{C}_1\leq \sqrt{3}$, $\hat{C}_2\leq\frac{\sqrt{2}}{2}$, such that $\forall p\in \mathbb{Q}^k([-1,1])$,
\begin{equation}
\|p_x\|_{L^2([-1,1])}\leq \hat{C}_1 k^2\|p\|_{L^2([-1,1])}, \quad p(x) \leq \hat{C}_2 (k+1)\|p\|_{L^2([-1,1])}\; \forall x\in [-1,1].
\label{eq:inv1}
\end{equation}
By applying these inverse inequalities, with a linear scaling from $[-1, 1]$ to $I_j$, and Cauchy-Schwarz inequality, we find that
\begin{align*}
\Lambda_1 &\leq 2 \hat{C}_1 \frac {c}{h} \sum_j \left( (q_u-1)^2 \|c \varphi\|_{L^2(I_j)}\|v\|_{L^2(I_j)} +q_v^2 \|\psi\|_{L^2(I_j)}\| cw \|_{L^2(I_j)}\right)\\
&\leq C_1 \frac {c}{h}\max\{(q_u-1)^2, q_v^2\}\left(\|cw\|^2_{L^2(\Omega)}+\|v\|^2_{L^2(\Omega)}\right)^{1/2}  
\!\left(\|c\varphi\|_{L^2(\Omega)}^2+\|\psi\|_{L^2(\Omega)}^2\right)^{1/2},
\end{align*}
and similarly, using a linear scaling from $[-1, 1]$ to $I_s$ (with $s=j,j\pm 1$), we have
\begin{align*}
\Lambda_2\leq& 2 \hat{C}_2^2 \frac {c}{h} \sum_j \Big( q_u(q_v+1)  \|c \varphi\|_{L^2(I_j)}\left(|\alpha|\|v\|_{L^2(I_{j-1}\cup I_{j})}+|1-\alpha|\|v\|_{L^2(I_j\cup I_{j+1})}\right)\\
&\;\;\;\;\;\; + c \beta q_u^2 \|c \varphi\|_{L^2(I_j)}\left(2\|cw\|_{L^2(I_j)}+\|cw\|_{L^2(I_{j-1}\cup I_{j+1})}\right)\Big),
\end{align*}
\begin{align*}
\Lambda_3\leq 
&2 \hat{C}_2^2 \frac {c}{h} \sum_j \Big( q_u(q_v+1)\|\psi\|_{L^2( I_j)}\left(|1-\alpha|\|cw\|_{L^2(I_{j-1}\cup I_j)}+|\alpha|\|cw\|_{L^2(I_{j}\cup I_{j+1})}\right)\\
&\;\;\; \;\;\;+\frac{\tau (q_v+1)^2}{c}\|\psi\|_{L^2(I_j)}\left(2\|v\|_{L^2(I_j)}+\|v\|_{L^2(I_{j-1}\cup I_{j+1})}\right)\Big),
\end{align*}
and hence

\begin{align*}
\Lambda_2+\Lambda_3\leq& C_2 \frac{c}{h} \left(2\max\left\{c \beta q_u^2, \frac {\tau}{c} (q_v+1)^2\right\}+(|\alpha|+ |1-\alpha|)(q_v+1)q_u\right)\\
&\;\;\;\;
\left(\|cw\|^2_{L^2(\Omega)}+\|v\|^2_{L^2(\Omega)}\right)^{1/2}  
\left(\|c \varphi\|_{L^2(\Omega)}^2+\|\psi\|_{L^2(\Omega)}^2\right)^{1/2}.
\end{align*}

Finally by adding up $\Lambda_j, j=1, 2, 3$, and based on  the operator norm $\|\mL\|$ in \eqref{eq:op:norm}, we reach the bound in \eqref{bounds_dg}. 
\end{proof}

\subsection{Operator Bounds for the Staggered Formulation}\label{staggered_formula_1d}
For the staggered version of the method we introduce element centers $\rho_j = x_{j+\frac{1}{2}}
 = (x_j+x_{j+1})/2$ as well as the staggered grid composed of the elements $I_{j+\frac{1}{2}} =
 [\rho_{j},\rho_{j+1}] 
 \; \forall j$. Associated with both grids, we define two broken finite element spaces 
\[
U_h^{q_u} = \{ w : w|_{I_j} \in \mathbb{Q}^{q_u}(I_j)\; \forall j\}, \ \ \ \ \widetilde{V}_h^{q_v} = \{ w : w|_{I_{j+\frac{1}{2}}} \in \mathbb{Q}^{q_v}(I_{j+\frac{1}{2}}) \; \forall j\}.
\]

The staggered energy-based DG scheme then consists of finding $u^h(\cdot,t) \in U_h^{q_u}$ and $\tilde{v}^h(\cdot,t) \in \widetilde{V}_h^{q_v}$ such that for any $\phi \in U_h^{q_u}$ and $\psi \in \widetilde{V}_h^{q_v}$ and for all $j$ 
\begin{subequations}
\label{eq:staggered} 
\begin{align}
\int_{x_j}^{x_{j+1}} c^2 \phi_x \frac{\partial u_x^h}{\partial t}dx
+\underbrace{\int_{x_j}^{\rho_j} c^2 \phi_{xx} \tilde{v}^{h} dx 
+ \int_{\rho_j}^{x_{j+1}} c^2 \phi_{xx} \tilde{v}^h dx}_{\int_{x_j}^{x_{j+1}}c^2\phi_{xx} \tilde{v}^{h} dx } 
 \notag \\ 
=c^2 \phi^-_x v^\ast\big|_{x_{j+1}}- c^2 \phi^+_x v^\ast\big|_{x_j}, 
\hspace{1.5in} \label{eq:staggered1} \\
\int_{\rho_j}^{\rho_{j+1}} \psi \frac{\partial \tilde{v}^{h}}{\partial t}dx
+\underbrace{\int_{\rho_j}^{x_{j+1}} c^2 \psi_x u_x^h dx
 + \int_{x_{j+1}}^{\rho_{j+1}} c^2 \psi_x  u_x^{h}dx}_{\int_{\rho_j}^{\rho_{j+1}} c^2\psi_x  u_x^{h}dx} 
  \notag \\ 
=c^2 \psi^- u_x^\ast\big|_{\rho_{j+1}}- c^2 \psi^+ u_x^\ast\big|_{\rho_j}, 
\hspace{1.5in} \label{eq:staggered2}\\
 \int_{x_j}^{x_{j+1}} \left(\frac{\partial u^h}{\partial t}-\tilde{v}^h\right)dx=0.
\hspace{1.5in} \label{eq:staggered3}
\end{align}
\end{subequations}
Note that the second and third integrals in (\ref{eq:staggered1}) (resp. in (\ref{eq:staggered2})) are against $\tilde{v}^h$ (resp. $u^h$) from two elements.

Explicitly we write the flux terms 
\begin{subequations}
\label{eq:sflux}
\begin{eqnarray}
v^\ast &=& H(\tilde{v}^h, u_x^h)=   
\tilde{v}^{h}  - \beta c^2 (u_{x}^{h,-} - u_{x}^{h,+}), \label{eq:sflux1} \\
u_{x}^\ast &=& G(u_x^h, \tilde{v}^h)= u_{x}^{h} - \frac {\tau}{c^2} (\tilde{v}^{h,-}-\tilde{v}^{h,+}) , \label{eq:sflux2}
\end{eqnarray}
\end{subequations}
noting that there is no ambiguity for $\tilde{v}^h$ in \eqref{eq:sflux1} and $u_x^h$ in \eqref{eq:sflux2} since they are evaluated at the element centers and are uniquely defined.

Assuming periodic boundary conditions, we apply integration by parts to  \eqref{eq:staggered1} and \eqref{eq:staggered2}, add them up in $j$ and reach  the equality
\begin{align}
\label{sum_scheme}
&\int_{\Omega} c^2 \phi_x\frac{\partial u^h_x}{\partial t} + \psi \frac{\partial\tilde{v}^h}{\partial t}dx = 
c^2 \sum_j
\int_{x_j}^{x_{j+1}} \left(\phi_{x} \tilde{v}^h_x 
-\psi_x u_x^h\right)dx\\
&+ c^2 \sum_j  \left(\phi_x^-(v^\ast-\tilde{v}^h)\big|_{x_{j+1}}-
\phi_x^+(v^\ast-\tilde{v}^h)\big|_{x_{j}} 
+\phi_x(\tilde{v}^{h,+}-\tilde{v}^{h,-}) \big|_{\rho_j}\right)\notag\\
&
+ c^2 \sum_j\left(\psi^- u_x^\ast\big|_{\rho_{j+1}}-
\psi^+ u_x^\ast\big|_{\rho_j}\right) , \notag
\end{align}
with semi-discrete stability of the method following directly from (\ref{Energyest}).

As for the non-staggered scheme, the first two equations \eqref{eq:staggered1}-\eqref{eq:staggered2} will determine $w^h=u^h_x\in U_h^{q_u-1}$ and $\tilde{v}^h\in \widetilde{V}_h^{q_v}$, and hence 
we define the operator
${\mL}_c: U_h^{q_u-1}\times \widetilde{V}_h^{q_v}\mapsto  U_h^{q_u-1}\times \widetilde{V}_h^{q_v}$, satisfying
\begin{align}\label{spatial_operator_s}
\int_{\Omega}\mathcal{L}_c(cw,v)(c\varphi,\psi) dx=&- c^2  
\sum_j\left(\int_{x_j}
^{x_{j+1}} \varphi_{x} v dx -\varphi^-H(v, w)|_{x_{j+1}}+
\varphi^+H(v, w)|_{x_j}\right) \notag\\
&- c^2 \sum_j\left(\int_{\rho_j}
^{\rho_{j+1}} \psi_x  wdx 
-\psi^- G(w, v)|_{\rho_{j+1}}+ \psi^+ G(w, v)|_{\rho_j}\right)
\end{align}
for any $\varphi\in U_h^{q_u-1}$ and $ \psi\in\widetilde{V}_h^{q_v},$ with the operator norm as
\begin{equation*}
\left\|  \mathcal{L}_c \right\| \equiv 
\sup_{\substack{w,\varphi \in U_h^{q_u-1},\, v, \psi \in \widetilde{V}_h^{q_v}\\
(w,v)\ne (0,0), (\varphi, \psi)\ne (0,0)}} \frac{\int_{\Omega}\mathcal{L}_c(cw,v)(c\varphi,\psi)dx}{\left(\|cw\|_{L^2(\Omega)}^2+\|v\|_{L^2(\Omega)}^2\right)^{1/2}\left(\|c\varphi\|_{L^2(\Omega)}^2+\|\psi\|_{L^2(\Omega)}^2\right)^{1/2}}.
\end{equation*}

The theorem governing the bound on the operator $\mL_c$ has a similar form as the non-staggered case, namely, with the quadratic dependence on $q_u$ and $q_v$.  But when the upwinding parameters $\beta$ and $\tau$ are set to zero and the numerical fluxes become purely central, or when $\beta$ and $\tau$ are chosen to be order-dependent, the result is significantly stronger. We now state and prove this theorem. 
\begin{truth}\label{thm:staggerbound}
Let the staggered energy-based DG spatial operator $\mathcal{L}_c$ be defined as in (\ref{spatial_operator_s}), with numerical fluxes defined by \eqref{eq:sflux} and  periodic boundary conditions. Then the following estimate holds: 
\begin{equation}\label{bounds_cdg} \|\mathcal{L}_c\| \leq
    \frac{c}{h}C_{3,\frac{1}{2}} (q_u+q_v-1)
   +\frac{c}{h}C_{4,\frac{1}{2}} \sqrt{q_u(q_v+1)}+\frac{c}{h}C_5 \max\left\{c\beta   q_u^2, \frac {\tau}{c} (q_v+1)^2\right\}.
   \end{equation}
   In particular, when $\beta  =\tau= 0$, or
   when $\beta  = \frac {\hat{\beta}}{q_u c}$, $\tau=\frac {c \hat{\tau}}{q_v+1}$ with  fixed dimensionless constants $\hat{\beta}$, $\hat{\tau}$, the result is strengthened in that  $\|\mathcal{L}_c\|$ is bounded linearly in  $q_u$ and  $q_v$.
Here  $C_{3,\frac{1}{2}}\leq \frac{8\sqrt{3}+4}{3}$, $C_{4,\frac{1}{2}}\leq\frac{128}{\sqrt{3}\pi}$, and $C_5 \leq 4$  are universal positive  constants, independent of $q_u$, $q_v$, $h$, and $\beta$  , $\tau$.
\end{truth}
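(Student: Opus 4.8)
The plan is to split the operator into its central and upwinding parts, $\mathcal{L}_c = \mathcal{L}_c^0 + \mathcal{L}_c^{\beta,\tau}$, where $\mathcal{L}_c^0$ uses $\beta=\tau=0$ in \eqref{eq:sflux}, and to bound the two pieces separately by the triangle inequality. The upwinding part $\mathcal{L}_c^{\beta,\tau}$ collects only the penalty contributions $-\beta c^2(u_x^{h,-}-u_x^{h,+})$ and $-\frac{\tau}{c^2}(\tilde v^{h,-}-\tilde v^{h,+})$, which live at element interfaces. I would bound these exactly as in the non-staggered estimate \eqref{bounds_dg}: apply the triangle inequality, use the point-value (trace) inverse inequality in \eqref{eq:inv1} on each factor, and conclude with a Cauchy--Schwarz over elements. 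This reproduces the third term $\frac{c}{h}C_5\max\{c\beta q_u^2, \frac{\tau}{c}(q_v+1)^2\}$, which is genuinely quadratic because it samples jumps at cell edges, and which collapses to a linear contribution under the order-dependent scaling $\beta=\hat\beta/(q_u c)$, $\tau=c\hat\tau/(q_v+1)$.

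The heart of the matter is the central part $\mathcal{L}_c^0$. First I would expose its structure: with $H(\tilde v,u_x)=\tilde v$ and $G(u_x,\tilde v)=u_x$, the two groups in \eqref{spatial_operator_s} decouple, the first depending only on $(v,\varphi)$ and the second only on $(w,\psi)$. Performing element-wise integration by parts---splitting each primal integral at the interior dual vertex $\rho_j$ and each dual integral at the interior primal vertex, and using that the staggering makes $v$ single-valued at the $x_j$ and $w$ single-valued at the $\rho_j$---I would rewrite each group and verify the skew identity $T_w(v,\varphi)=-T_v(\varphi,v)$ (consistent with the energy conservation \eqref{Energyest} when $\beta=\tau=0$). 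Consequently $\mathcal{L}_c^0$ is block skew-adjoint, and by a Cauchy--Schwarz over the two-component norm it suffices to bound a single block, say $T_w(v,\varphi)=c^2\sum_j\big(\int_{I_j}\varphi\, v_x\,dx-\varphi(\rho_j)\,(v^-(\rho_j)-v^+(\rho_j))\big)$, by $M\,\|v\|_{L^2(\Omega)}\,\|c\varphi\|_{L^2(\Omega)}$ with $M$ linear in the degrees.

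The key estimate, and the main obstacle, is precisely this linear bound. Neither the volume pairing $\int_\Omega \varphi_x v$ nor its integrated-by-parts form $\int_\Omega \varphi v_x$ can be bounded linearly in isolation: each forces a first-derivative inverse inequality and is sharp at order $q^2/h$ (the $T_q'(\pm1)=q^2$ edge blow-up). The linear bound can only emerge after the regrouping above, which relocates the sensitive evaluations from element edges to element centers and exploits the staggered continuity---$v$ cannot reproduce the edge singularity of $\varphi_x$ because $v$ is continuous across each primal vertex $x_j$, whereas $\varphi$ and $\varphi_x$ jump there. To quantify this I would prove and use a sharp center-value estimate: for $p\in\mathbb{Q}^k(I)$ on an interval $I$ of length $H$ with midpoint $x_m$, one has $|p(x_m)|\le \sqrt{2k/(\pi H)}\,\|p\|_{L^2(I)}$, which follows from a Legendre expansion and the asymptotics $P_k(0)^2\sim 2/(\pi k)$. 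This $\sqrt{k}$ growth---rather than the edge growth $k+1$ in \eqref{eq:inv1}---is the origin of both the $\sqrt{q_u(q_v+1)}$ term and of the $\pi$ in $C_{4,\frac12}$. Applying the estimate to the center values $\varphi(\rho_j)$ and $v(x_j)$, balancing the remaining volume derivative symmetrically between the two spaces to produce the $(q_u+q_v-1)$ contribution, and finishing with a Cauchy--Schwarz over cells, yields the central bound. The delicate points---carrying out the cancellation cleanly on the half-overlapping staggered cells and tracking the explicit constants $C_{3,\frac12}$ and $C_{4,\frac12}$ through the midpoint inequality---are where the genuine work lies; assembling the two parts and restoring the $c/h$ scaling then gives \eqref{bounds_cdg}.
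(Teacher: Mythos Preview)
Your split into central and upwinding parts is fine, and your treatment of the penalty terms matches the paper's and yields the third summand in \eqref{bounds_cdg}. The gap is in the central block. After your integration by parts you arrive at
\[
T_w(v,\varphi)=c^2\sum_j\Big(\int_{I_j}\varphi\,v_x\,dx-\varphi(\rho_j)\big(v^-(\rho_j)-v^+(\rho_j)\big)\Big),
\]
and you then propose to apply a midpoint estimate to $\varphi(\rho_j)$ and to $v(x_j)$. But $v(x_j)$ does not appear in $T_w$: the jump is at $\rho_j$, which is the \emph{center} of $I_j$ for $\varphi$ but the \emph{endpoint} of $I_{j\pm 1/2}$ for $v$. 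Your midpoint bound gives $|\varphi(\rho_j)|\lesssim \sqrt{q_u/h}\,\|\varphi\|_{L^2(I_j)}$, yet $|v^{\pm}(\rho_j)|$ only admits the endpoint trace bound from \eqref{eq:inv1}, contributing a full factor $(q_v+1)$; the product is $\sqrt{q_u}\,(q_v+1)/h$, not $\sqrt{q_u(q_v+1)}/h$. The volume term has the same defect: $\int_{I_j}\varphi\,v_x$ forces $v_x$ to be evaluated on $[x_j,\rho_j]\cup[\rho_j,x_{j+1}]$, i.e.\ on outer halves of the dual cells, where the derivative inverse inequality is still quadratic in $q_v$. Your phrase ``balancing the remaining volume derivative symmetrically between the two spaces'' is precisely the missing idea, and as stated it does not produce the linear bound.

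The paper's resolution is to partition not at the half points $\rho_j$ but at the quarter points $x_{j+1/4}$ and $x_{j+3/4}$. Geometrically, $x_{j+1/4}$ sits at scaled position $-\tfrac12$ in $I_j$ \emph{and} at scaled position $+\tfrac12$ in $I_{j-1/2}$, so it is an interior point for both the primal and the dual cell simultaneously. After integrating by parts on the three subintervals $(x_j,x_{j+1/4})$, $(x_{j+1/4},x_{j+3/4})$, $(x_{j+3/4},x_{j+1})$, one keeps the derivative on $\varphi$ over the middle half of $I_j$ and on $v$ over the middle halves of the dual cells; then the interior derivative inverse inequality $\|p_x\|_{L^2([-1/2,1/2])}\le \hat C_{3,1/2}\,k\,\|p\|_{L^2([-1,1])}$ from \eqref{eq:inv2} gives the linear term $(q_u+q_v-1)$. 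The new boundary terms live at $x_{j\pm 1/4}$, where \emph{both} $\varphi$ and $v$ enjoy the $\sqrt{k}$ point-value bound of \eqref{eq:inv2}, producing $\sqrt{q_u(q_v+1)}$. This quarter-point partition is the mechanism your proposal is missing; your midpoint estimate is correct in spirit but is applied at a point that is a center for only one of the two functions.
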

\begin{proof}

By  partitioning $I_j$ into $(x_j, x_{j+\frac{1}{4}})$, $(x_{j+\frac{1}{4}}, x_{j+\frac{3}{4}})$, $(x_{j+\frac{3}{4}}, x_{j+1})$, partitioning $I_{j+\frac{1}{2}}$ into $(x_{j+\frac{1}{2}},x_{j+\frac{3}{4}})$, $(x_{j+\frac{3}{4}},x_{j+\frac{5}{4}})$, $(x_{j+\frac{5}{4}},x_{j+\frac{3}{2}})$, and performing integration by parts on those sub-intervals of length $h/4$, we find  for any $w,\varphi \in U^{q_u-1}_h$ and $v,\psi\in \tilde{V}^{q_v}_h$,
\begin{align*}
   & \int_{\Omega}\mathcal{L}_c(cw,v)(c\varphi,\psi) dx\\
    =&
c^2    \sum_j\left( \int_{x_j}^{x_{j+\frac{1}{4}}}\varphi v_xdx-\int_{x_{j+\frac{1}{4}}}^{x_{j+\frac{3}{4}}}\varphi_{x}vdx+\int_{x_{j+\frac{3}{4}}}^{x_{j+1}}\varphi v_xdx\right) \\
  &+ c^2 \sum_j\left(\int_{x_{j+\frac{1}{2}}}^{x_{j+\frac{3}{4}}}\psi w_{x}dx - \int_{x_{j+\frac{3}{4}}}^{x_{j+\frac{5}{4}}}\psi_x wdx+\int_{x_{j+\frac{5}{4}}}^{x_{j+\frac{3}{2}}}\psi w_{x}dx\right)\\
  &+ c^2 \sum_j\left( -\beta c^2 \varphi^+\left(w^+-w^-\right)\big|_{x_j} + \beta c^2 \varphi^-\left(w^+-w^-\right)\big|_{x_{j+1}} \right)\\
&+ c^2 \sum_j\left(-\frac{\tau}{c^2}\psi^{+}\left(v^{+}-v^{-}\right)\big|_{x_{j+\frac{1}{2}}}+\frac{\tau}{c^2}\psi^{-}\left(v^{+}-v^{-}\right)\big|_{x_{j+\frac{3}{2}}}\right)\\
&+c^2\sum_j\left(-\psi w\big|_{x_{j+\frac{3}{4}}}+\psi w\big|_{x_{j+\frac{5}{4}}}-\varphi v\big|_{x_{j+\frac{1}{4}}} + \varphi v\big|_{x_{j+\frac{3}{4}}}\right).
\end{align*}

Then by the triangle inequality, we have 
\begin{equation} \int_{\Omega}\mathcal{L}_c(cw,v)(c\varphi,\psi) dx\leq \sum_{k=1}^3 \Theta_{u,k}+ \sum_{k=1}^3 \Theta_{v,k},
\label{eq:Tuv0}
\end{equation}
where
\begin{eqnarray*}
    \Theta_{u,1} 
   &=& c \sum_j\left( \int_{x_{j+\frac{3}{4}}}^{x_{j+\frac{5}{4}}}\big|c \varphi v_x\big|dx+\int_{x_{j+\frac{1}{4}}}^{x_{j+\frac{3}{4}}}\big| c \varphi_{x}v\big|dx\right), \\
    \Theta_{u,2} &= & c \sum_j\left( \big|c \varphi \big|\big|v\big|\Big|_{x_{j+\frac{1}{4}}} + \big| c \varphi\big|\big|v\big|\Big|_{x_{j+\frac{3}{4}}}\right),\\
    \Theta_{u,3} &=& \beta c^2  \sum_j\left( \big|c \varphi^+\big|\big| c w^+\big|\Big|_{x_j}+\big| c \varphi^+\big|\big| c w^{-}\big|\Big|_{x_j} + \big| c \varphi^{-}\big|\big| c w^{+}\big|\Big|_{x_{j+1}}+\big| c \varphi^-\big|\big| c w^-\big|\Big|_{x_{j+1}}\right),\\
 \Theta_{v,1} 
&= & c \sum_j\left(\int_{x_{j+\frac{1}{4}}}^{x_{j+\frac{3}{4}}}\big|\psi cw_{x}\big|dx + \int_{x_{j+\frac{3}{4}}}^{x_{j+\frac{5}{4}}}\big|\psi_x cw\big|dx\right), \\
\Theta_{v,2} &= & c \sum_j\left(\big|\psi\big|\big| c w\big|\Big|_{x_{j+\frac{3}{4}}}+\big|\psi\big|\big| cw \big|\Big|_{x_{j+\frac{5}{4}}}\right),\\
\Theta_{v,3} &=& \tau \sum_j\left(\big|\psi^{+}\big|\big|v^{+}\big|\Big|_{x_{j+\frac{1}{2}}}+\big|\psi^{+}\big|\big|v^{-}\big|\Big|_{x_{j+\frac{1}{2}}} +\big|\psi^{-}\big|\big|v^{+}\big|\Big|_{x_{j+\frac{3}{2}}}+\big|\psi^{-}\big|\big|v^{-}\big|\Big|_{x_{j+\frac{3}{2}}}\right).
\end{eqnarray*}

In preparation,  we  recall some inverse inequalities for polynomials spaces (e.g. see Lemmas 3-4 in \cite{fli}): there exist positive constants $\hat{C}_{3,\frac{1}{2}}\leq \frac{4\sqrt{3}+2}{3}$, $\hat{C}_{4,\frac{1}{2}}\leq 4\sqrt{\frac{3}{\pi}}(\frac{3}{4})^{-1/4}=\sqrt{\frac{32}{\sqrt{3}\pi}}$, such that $\forall p\in \mathbb{Q}^k([-1,1])$,
\begin{equation}
\|p_x\|_{L^2([-\frac{1}{2},\frac{1}{2}])}\leq \hat{C}_{3,\frac{1}{2}} k\|p\|_{L^2([-1,1])}, \quad p(\pm\frac{1}{2}) \leq \hat{C}_{4,\frac{1}{2}} \sqrt{k+1}\|p\|_{L^2([-1,1])}.
\label{eq:inv2}
\end{equation}
These inverse inequalities display different dependence on polynomial degree $k$ from those in  \eqref{eq:inv1}, and they will play a key role for our estimate with the desired dependence on the approximation order $q_u$, $q_v$. 

We start with bounding the volume integral terms $\Theta_{u,1}$,
$\Theta_{v,1}$.
By using Cauchy-Schwarz inequality, and the first inverse inequality in \eqref{eq:inv2} with a linear scaling from $[-1, 1]$ to $I_j$ (or to $I_{j-\frac{1}{2}}=[x_{j-\frac{1}{2}},x_{j+\frac{1}{2}}]$), we get
\begin{eqnarray*}
\Theta_{u,1} & \leq & c \sum_j
\| c \varphi\|_{L^2([x_{j+\frac{3}{4}},x_{j+\frac{5}{4}}])}\|v_x\|_{L^2([x_{j+\frac{3}{4}},x_{j+\frac{5}{4}}])} + \| c \varphi_{x}\|_{L^2([x_{j+\frac{1}{4}},x_{j+\frac{3}{4}}])}\|v\|_{L^2([x_{j+\frac{1}{4}},x_{j+\frac{3}{4}}])}\\
& \leq &  
\frac{c}{h} 2 \hat{C}_{3,\frac{1}{2}} \sum_j \left(q_v\| c \varphi\|_{L^2(I_{j+\frac{1}{2}})}\|v\|_{L^2(I_{j+\frac{1}{2}})}+(q_u-1)\| c \varphi\|_{L^2(I_j)}\|v\|_{L^2(I_j)}\right)\\
&.&
 \left(q_v\|\varphi\|_{L^2([x_{j+\frac{3}{4}},x_{j+\frac{5}{4}}])}\|v\|_{L^2(I_{j+\frac{1}{2}})}+(q_u-1)\|\varphi\|_{L^2(I_j)}\|v\|_{L^2([x_{j+\frac{1}{4}},x_{j+\frac{3}{4}}])}\right).
\end{eqnarray*}
Similarly, we obtain 
\begin{equation*}
    \Theta_{v,1} \leq \frac{c}{h} 2 \hat{C}_{3,\frac{1}{2}} \sum_j \left((q_u-1)\| cw \|_{L^2(I_j)}\|\psi\|_{L^2(I_j)}
     + q_v\| cw \|_{L^2(I_{j+\frac{1}{2}})}
     \|\psi\|_{L^2(I_{j+\frac{1}{2}})}\right).
\end{equation*}
By applying Cauchy-Schwarz inequality, we have
\begin{eqnarray}
 \Theta_{u,1}+\Theta_{v,1} \leq \hspace{3.0in} & & \label{eq:Tuv1} \\     \frac{c}{h} C_{3,\frac{1}{2}} (q_u+q_v-1) \left(\| cw \|_{L^2(\Omega)}^2+\|v\|_{L^2(\Omega)}^2\right)^{1/2}\left(\| c \varphi \|_{L^2(\Omega)}^2+\|\psi\|_{L^2(\Omega)}^2\right)^{1/2}. & & \nonumber 
\end{eqnarray}
Next, we bound the boundary terms $\Theta_{u,2},\Theta_{v,2}$. By using the second  inverse inequality in \eqref{eq:inv2} with a linear scaling from $[-1, 1]$ to $I_j$ (or to $I_{j-\frac{1}{2}}$), we reach
\begin{align*}
\Theta_{u,2} 
&\leq \frac{c}{h} 2 \hat{C}_{4,\frac{1}{2}}^2\sqrt{q_u(q_v+1)}\sum_j\| c \varphi \|_{L^2(I_j)}\Big(\|v\|_{L^2(I_{j-\frac{1}{2}})}+
\|v\|_{L^2(I_{j+\frac{1}{2}})}\Big),\\
\Theta_{v,2} &\leq  \frac{c}{h} 2 \hat{C}_{4,\frac{1}{2}}^2\sqrt{q_u(q_v+1)}  \sum_j
 \Big(\| cw \|_{L^2(I_j)}\|+
\| cw \|_{L^2(I_{j+1})}\Big)\|\psi\|_{L^2(I_{j+\frac{1}{2}})},
\end{align*}
and hence
\begin{eqnarray}
 \Theta_{u,2}+\Theta_{v,2} \leq \hspace{3.0in} & & \label{eq:Tuv2} \\
 \frac{c}{h}C_{4,\frac{1}{2}}\sqrt{q_u(q_v+1)} \left(\| cw \|_{L^2(\Omega)}^2+\|v\|_{L^2(\Omega)}^2\right)^{1/2}\left(\| c \varphi \|_{L^2(\Omega)}^2+\|\psi\|_{L^2(\Omega)}^2\right)^{1/2}. & & \nonumber
\end{eqnarray}
For $\Theta_{u,3}$ and  $\Theta_{v,3}$, using the inverse inequality in \eqref{eq:inv1} with a linear scaling from $[-1, 1]$ to $I_s$ (with $s=j, j\pm 1, j\pm\frac{1}{2}, j+\frac{2}{3}$),
\begin{align*}
\Theta_{u,3}&
\leq \frac{c}{h} 2 \hat{C}_2^2 c \beta   q_u^2 \sum_j \| c \varphi \|_{L^2(I_j)}
 \Big(2\| cw \|_{L^2(I_j)}
 + \| cw \|_{L^2(I_{j-1})} + \| cw \|_{L^2(I_{j+1})}\Big),\\
\Theta_{v,3} &
\leq  \frac{c}{h} 2 \hat{C}_2^2 \f {\tau}{c} (q_v+1)^2 \sum_j\|\psi\|_{L^2(I_{j+\frac{1}{2}})}
\Big(2\|v\|_{L^2(I_{j+\frac{1}{2}})}
+ \|v\|_{L^2(I_{j-\frac{1}{2}})} + \|v\|_{L^2(I_{j+\frac{3}{2}})}\Big),
\end{align*}
and hence
\begin{eqnarray}
 \Theta_{u,3}+\Theta_{v,3} \leq \hspace{3.0in} & & \label{eq:Tuv3} \\   \frac{c}{h} C_5 \max\{c \beta   q_u^2, \frac {\tau}{c} (q_v+1)^2\} 
 \left(\| cw \|_{L^2(\Omega)}^2+\|v\|_{L^2(\Omega)}^2\right)^{1/2}\left(\| c \varphi \|_{L^2(\Omega)}^2+\|\psi\|_{L^2(\Omega)}^2\right)^{1/2}. & & \nonumber
\end{eqnarray}

Finally, we combine \eqref{eq:Tuv0}, \eqref{eq:Tuv1}-\eqref{eq:Tuv3}, and conclude 
\begin{equation*}
 \|\mathcal{L}_c\| \leq
    \frac{c}{h}C_{3,\frac{1}{2}} (q_u+q_v-1)
   +\frac{c}{h}C_{4,\frac{1}{2}} \sqrt{q_u(q_v+1)}+\frac{c}{h}C_5\max\left\{ c \beta   q_u^2, \frac{\tau}{c} (q_v+1)^2\right\}.
   \end{equation*}
\end{proof}

\section{Timestepping}\label{sec:timestepping}
After the spatial semi-discretization, 
we are faced with evolving the linear system of equations 
\begin{equation}\label{local_ode_5}
M \frac{d W^h}{dt} = AW^h + F^h.
\end{equation}
Here $M$ and $A$ are the mass and stiffness matrices corresponding to the spatial discretizations at hand and $W^h$ is a vector containing all degrees of freedom.  To exploit the operator bounds \eqref{bounds_cdg} we note that we can partition $W^h$,
\begin{displaymath}
W^h = \left( \ba{c} W_1^h \\ W_0^h \ea \right) ,
\end{displaymath}
where $W_1^h$ includes all degrees-of-freedom except the cell averages of $u^h$ and $W_0^h$ simply consists of those cell averages. In other words, in each element we write
\begin{displaymath}
u^h = u_1^h + u_0^h, \ \ \int_{\Omega_j} u_1^h dx =0. 
\end{displaymath}
Similarly partitioning the test functions we see that the semi-discrete system is of the form
\begin{equation}\label{partitioned_ode}
\left( \ba{cc} M_1 & 0 \\ 0 & M_0 \ea \right) \f {d}{dt} \left( \ba{c} W_1^h \\ W_0^h \ea \right) = \left( \ba{cc} A_{11} & 0 \\ A_{01} & 0 \ea \right) \left( \ba{c} W_1^h \\ W_0^h \ea \right) + \left( \ba{c} F_1^h \\ F_0^h \ea \right) .
\end{equation}

This structure implies that stability is determined by the time stepping scheme applied to the $W_1^h$ subsystem; $W_0^h$ can be computed independently via an integration in time, though in practice we have used the same scheme for all degrees-of-freedom. In addition, as the $W_0^h$ subsystem is simply the discrete form of \eqref{extra_s}, the matrix $M_0^{-1} A_{01}$ will be uniformly bounded in both $h$ and the polynomial degrees. The operator bounds derived in section \ref{sec:operatorbounds} directly apply to the $W_1^h$ subsystem under the restrictions
given there (one space dimension and periodic boundary conditions). In particular as the norm induced by $M_1$ is simply the sum of the $L^2$ norms of $v^h$ and $\f {\pa u^h}{\pa x}$ we have:
\begin{align}
\||a\||&=\textrm{sup}_{0\ne g, \phi\in F_h}\frac{a(g,\phi)}{\|g\|_{L^2(\Omega)}\|\phi\|_{L^2(\Omega)}}\notag\\
&=\textrm{sup}_{{\bf x}, {\bf y}\ne 0}\frac{{\bf x}^T A_{11} {\bf y}}{\|{\bf x}\|_{M_1}\|{\bf y}\|_{M_1}}
=\textrm{sup}_{{\bf x}, {\bf y} \ne 0}\frac{\langle {\bf x}, M_1^{-1} A_{11} {\bf y}\rangle_{M_1}}{\|{\bf x}\|_{M_1}\|{\bf y}\|_{M_1}}=\|M_1^{-1} A_{11}\|_{M_1}.
\end{align}
Here $F_h=U_h^{q_u-1}\times \tilde{V}_h^{q_v}$, and the staggered method for the unknowns $f^h=(u_x^h, \tilde{v}^h)\in F_h$ can be written as 
$(\frac{d f^h}{dt}, \phi)=a(f^h, \phi) \; \forall \phi\in F_h$. 
The simplest application of this result is in the case of central fluxes, which is what we use in the numerical experiments. Then $A_{11}$ is skew-symmetric and therefore the generalized eigenvalue problem $i \alpha_j M_{11} \phi_j = A_{11} \phi_j$ has orthonormal eigenvectors in the induced inner product; thus a simple von Neumann analysis applies. In fact for the central fluxes we can prove that stability-enhanced leap-frog schemes as constructed in \cite{JolyRodriguezLeapFrog} of the same order as the spatial discretization and a number of stages proportional to the order can always be used with a CFL number $\mathcal{O}(1)$. We note that optimized schemes are constructed in \cite{JolyRodriguezLeapFrog} and in Figure \ref{fig:spectrum_1d_periodic} we display
time step stability limits based on these. 

\begin{truth}\label{LeapFrog}
Under the assumptions of Theorem \ref{thm:staggerbound} and using central fluxes, there exist constants $C_1$ and $C_2$ independent of $q_u$ and $q_v$ and time stepping schemes with order
$q_T \geq \max (q_u,q_v)$ and a number of stages bounded by $C_2 q_T$ such that the fully discrete method is stable under the CFL condition $c \Delta t \leq C_1 h$.
\end{truth}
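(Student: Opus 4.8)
The plan is to reduce the fully discrete stability question to a family of scalar (von~Neumann) problems and then match the imaginary-axis stability interval of a stability-enhanced leap-frog scheme, whose length grows linearly in the number of stages, against the linear-in-order operator bound of Theorem~\ref{thm:staggerbound}. First I would exploit the central-flux structure. With $\beta=\tau=0$ the bilinear form $a(\cdot,\cdot)$ associated with the $W_1^h$ subsystem in \eqref{partitioned_ode} is skew-symmetric, so $\mathcal{L}_c$ (equivalently $M_1^{-1}A_{11}$) is skew-adjoint with respect to the inner product induced by $M_1$. A skew-adjoint operator is normal; it therefore admits an $M_1$-orthonormal eigenbasis $\phi_j$ with purely imaginary eigenvalues $i\alpha_j$, and its operator norm equals its spectral radius. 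Combining this with Theorem~\ref{thm:staggerbound} evaluated at $\beta=\tau=0$ gives, with $q:=\max(q_u,q_v)$,
\[
\max_j |\alpha_j| = \|\mathcal{L}_c\| \leq \frac{c}{h}\left(C_{3,\frac{1}{2}}(q_u+q_v-1)+C_{4,\frac{1}{2}}\sqrt{q_u(q_v+1)}\right)\leq \hat{C}\,\frac{c}{h}\,q,
\]
for a universal constant $\hat{C}$, using $q_u+q_v-1\leq 2q$ and $\sqrt{q_u(q_v+1)}\leq\sqrt{2}\,q$.

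Next I would bring in the time discretization. Because the eigenvectors $\phi_j$ are $M_1$-orthonormal, the amplification operator of the leap-frog scheme is simultaneously diagonalized, so the fully discrete scheme is stable if and only if each scalar mode, governed by the frequency $|\alpha_j|$, is stable; this is the von~Neumann reduction already signaled in the text. For the leap-frog family the per-mode stability condition reads $\Delta t\,|\alpha_j|\leq R(s)$, where $R(s)$ is the half-length of the imaginary-axis stability interval of the $s$-stage scheme. The crucial input, which I would quote from \cite{JolyRodriguezLeapFrog}, is that for every order $q_T$ there exist stability-enhanced leap-frog schemes for which $R(s)\geq\kappa\, s$, with a positive constant $\kappa$ that is uniform in the order and in the number of stages.

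It then remains to balance the two linear growths. Choosing the temporal order $q_T\geq\max(q_u,q_v)=q$, which matches the spatial accuracy, and a number of stages $s=C_2 q_T$ yields $R(s)\geq\kappa C_2 q_T\geq\kappa C_2 q$. The von~Neumann condition $\Delta t\,\max_j|\alpha_j|\leq R(s)$ is therefore implied by $\Delta t\,\hat{C}\,\tfrac{c}{h}\,q\leq\kappa C_2 q$, that is, by $c\,\Delta t\leq(\kappa C_2/\hat{C})\,h=:C_1 h$, with $C_1$ independent of $q_u$ and $q_v$. The factor $q$ cancels precisely because the stability radius and the operator norm grow at the same linear rate in the order.

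The single most delicate point is exactly this quantitative claim: that the stability radius of the enhanced leap-frog schemes grows \emph{linearly} in the number of stages with an order-independent constant $\kappa$, since it is this linear growth that cancels the linear-in-order factor in $\|\mathcal{L}_c\|$ and produces an $\mathcal{O}(1)$ CFL number. Everything else is bookkeeping: verifying skew-adjointness of $\mathcal{L}_c$ under central fluxes, confirming that the $W_0^h$ block in \eqref{partitioned_ode} decouples through its block-triangular, zero-$(1,2)$ structure and hence does not affect stability, and checking that the stage count $s=C_2 q_T$ is large enough to reach the required radius while preserving order $q_T$.
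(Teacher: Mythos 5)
Your proposal is correct and follows essentially the same route as the paper: both rest on the skew-symmetry of $A_{11}$ under central fluxes (so the spectral radius of $M_1^{-1}A_{11}$ equals the operator norm bounded linearly in the order by Theorem \ref{thm:staggerbound} with $\beta=\tau=0$), and both balance this against the stability interval of the stability-enhanced leap-frog schemes of \cite{JolyRodriguezLeapFrog}, which grows linearly in the number of stages with an order-independent constant. The only cosmetic difference is that the paper applies the Joly--Rodriguez result directly to the second-order form $M_1 \frac{d^2 W_1^h}{dt^2}=A_{11}M_1^{-1}A_{11}W_1^h$, giving $\Delta t^2 \leq q_T^2/(e^2\rho^2(M_1^{-1}A_{11}))$ with $q_T$ stages (i.e.\ your $\kappa=1/e$ and $C_2=1$), whereas you phrase the per-mode condition through the first-order system's imaginary-axis interval --- precisely the ``analogous'' adaptation the paper mentions and whose algebraic details it omits.
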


\begin{proof}
The results in \cite{JolyRodriguezLeapFrog} may be directly applied to the second order equation $M_1 \f {d^2 W_1^h}{dt^2}=A_{11}M_1^{-1} A_{11} W_1^h$. In particular they show that order $q_T$ leap-frog schemes with $q_T$ stages (applications of the spatial operator) can be constructed which are stable for $\Delta t^2 \leq \f {q_T^2}{e^2 \rho^2(M_1^{-1} A_{11})}$ with $\rho$ denoting the spectral radius. This establishes the result. It is possible to adapt their arguments to leap-frog schemes applied to the first order case, leading to the analogous inequality $\Delta t \leq \f {q_T}{e \ \rho(M_1^{-1} A_{11})}$, but we omit the algebraic details.
\end{proof}

More generally, we can invoke the Kreiss-Wu theory \cite{kreiss1993stability} to relate the time step stability limits to the local stability radius of any locally stable time stepping we employ. This theory is based on energy estimates which we have derived above. In particular if the local stability radius is $R$ the fully discrete method is stable if $\Delta t < \f {R}{\| M_1^{-1} A_{11} \|_{M_1}} = O( h/\max(q_u,q_v))$. In our numerical experiments we simply use Taylor time stepping. Given the value of $W^h$ at time $t$ \begin{equation}\label{taylor_5}
W^h(t) \approx \sum_{j=0}^{q_{\rm T}} \f {(t-t_n)^j}{j!} \f {d^j W^h}{dt^j}
\end{equation}
This expansion can easily be computed as time derivatives of $W^h$ at $t = t_n$,  and  can be obtained sequentially by (\ref{local_ode_5}) and the time step is completed by setting $t=t_n + \Delta t$. As is well-known, the Taylor methods are locally stable for orders $q_{\rm T}=3,4,7,8,11,12, \ldots$. However, for $q_{\rm T}$ large they satisfy $R \sim \pi$ for $q_T$ a multiple of $4$
and $R \sim \pi/2$ for $q_{\rm T}$ one less than a multiple of $4$ \cite{ketcheson2015absolute}. Thus we cannot use them with an order-independent CFL number. (We conjecture that
stability-enhanced schemes can be built off of the Taylor methods and have some preliminary examples, but they are not used here.) Despite this we find that if the spatial discretization
order is bounded by 16 we can march at the same or greater order with a CFL number bounded by 0.1. We emphasize that the method is not restricted to the stability-enhanced leap-frog schemes or the Taylor-based methods used in the experiments. Any standard locally-stable scheme can be used for all our choices of numerical flux and reversible schemes can be applied for the energy-conserving fluxes. 

On the other hand, when physical boundaries are present the mesh cannot be staggered and the time steps must be reduced to maintain stability. Fortunately this step size reduction can be localized to a few elements near the boundary. Following the local time stepping method by Diaz and Grote \cite{diaz2009energy} we advance the solution for one time step $\Delta t$, starting with $W^h(t_n)$, as follows.

\begin{enumerate}
\item Partition $W^h$ into two parts, $W^h_B$ consisting of degrees-of-freedom associated with
elements near the boundary, and $W^h_I$.
\item Compute all terms $\f {d^{\ell} W^h (t_n)}{dt^{\ell}}$, $\ell=1, \ldots , q_{\rm T}$.
These can be used to update $W^h_I (t_n+\Delta t)$.
\item To update $W_B^h$ take $p$ sub-steps with step size $\delta t = \Delta t / p$ using
(\ref{local_ode_5})-(\ref{taylor_5}) with $W^h$ replaced by $W_B^h$. Here flux terms associated with the interface between elements assigned to the near boundary group and the interior group
give rise to a forcing function $F_B^h$. We evaluate $F_B^h$ and all necessary derivatives at
any intermediate time step using (\ref{taylor_5}).
\end{enumerate}

\section{Numerical Experiments}\label{sec:experiment}

In this section, we present numerical experiments that illustrate the properties of our staggered method. In all cases we use a modal formulation with tensor product Legendre polynomials and we use exact integration (through the use of quadrature of sufficiently high order) to compute the integrals in the variational formulation. For all tests, we use purely central fluxes, i.e. we set  $\tau,\beta   = 0$. 

\subsection{Computed Rates of Convergence}

\begin{figure}[ht]
  \begin{center} 
\includegraphics[width=0.49\textwidth]{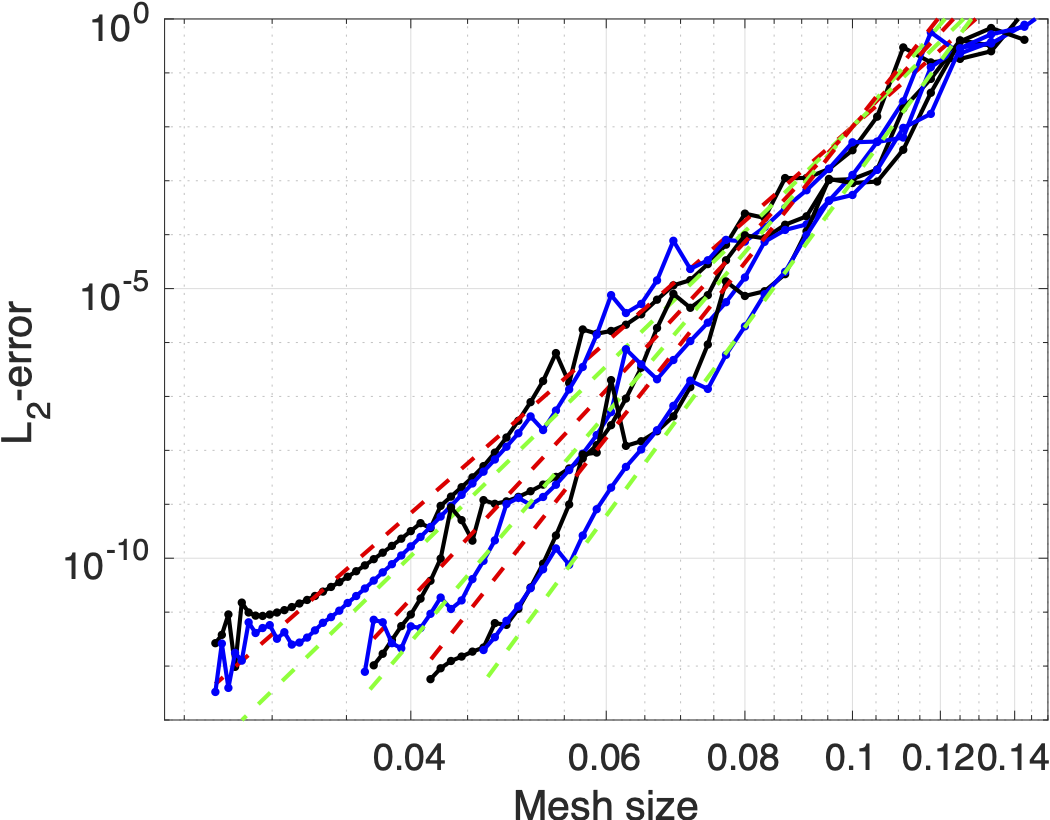}
  \includegraphics[width=0.49\textwidth]{l2_error_sin_cos_181922232627} \caption{To the left are the $L_2$-errors in $u^h$ for $q_u = 2, 3, 6,7,10,11,14,15$ and to the right for $q_u = 18,19,22,23,26,27$. The dashed lines have slope $q_u$ when $q_u$ is even and $q_u+1$ when $q_u$ is odd corresponding to the expected rates of convergence for central fluxes.  \label{fig:L2_errors_sin_cos}}
 \end{center}
\end{figure}
Here we evolve the exact solution  
\[
u(x,t) = \sin(\omega(x+t)), \ \  v(x,t) = \omega \cos (\omega(x+t)), 
\]
on the periodic domain $\Omega=[-1,1]$ until $T = 2.2$. We discretize using the staggered scheme with $q_u = 2,3,6,7,10,11,14,15,18,19,22,23,26,27$ and $q_v = q_u-1$. In order to make it possible to observe the rates of convergence we set $\omega = 2 q_u \pi$ for $q_u = 2,3,6,7,10,11,14,15$ and $\omega = 4 q_u \pi$ for $q_u = 18,19,22,23,26,27$. 

To evolve in time we use Taylor series time stepping with $q_{\rm T}=q_u+1$ (the stability domains of all of these Taylor series methods contain the imaginary axis) and throughout we keep the ratio $\frac{\Delta t}{h} = 0.1$. The $L_2$-errors in the solution $u^h$ as a function of the element size $h$ are displayed in Figure \ref{fig:L2_errors_sin_cos}. As can be seen from the figure the rates of convergence (as indicated by the dashed lines) appear to be optimal,  i.e. $q_u+1$, when $q_u = 3,7,\ldots$ and suboptimal by one, i.e. $q_u$ when $q_u = 2,6,\ldots$. This is consistent with the analysis and numerical experiments for the non-staggered scheme and central fluxes; see \cite{Upwind2}.

\subsection{Spectral Radii of Periodic Semi-discretization}

Consider now the matrix, $A$, in the semi-discretization (\ref{local_ode_5}).
With purely central fluxes, the eigenvalues of $A$ will be  imaginary and based on the estimates on the operator norm of $\mathcal{L}_c$ we expect them to grow linearly with $q_u$. In this experiment we set $q_v = q_u$ and consider a computational domain $\Omega = [-1,1]$. 

In Figure \ref{fig:spectrum_1d_periodic} we display the spectral radii of the matrix $A$, i.e. the eigenvalue of $A$ with the largest magnitude, $\lambda_\infty$, scaled by $(h/q_u)$ for three different element sizes $h = 2/5, 2/10, 2/20$.  As can be seen the growth of the spectral radii appears to be asymptotically linear in $q_u$ (i.e. constant when scaled by $q_u^{-1}$). The left figure displays ratio of the square root of the diagonal entries in Fig. 2 in \cite{JolyRodriguezLeapFrog} and the  spectral radii of the time stepping matrix $A$ scaled by the element size $h$. Note that the enhanced stability limits  ($\sqrt{\alpha_{m,k}}, k = m-1$) given in \cite{JolyRodriguezLeapFrog} are only available for even orders so for $q_u = 3$ we use the 4th order limit and for $q_u=5$ we use the 6th order limit, etc. From the figure we see that the ratio (which corresponds to the CFL number) is at least 0.6 for all orders considered. Also note that if such stability enhanced methods were available for the leap-frog scheme (which, again, we believe is possible) they would be particularly efficient for $q_u=3$ and 8. 
\begin{figure}[ht]
\begin{center} 
\includegraphics[width=0.48\textwidth]{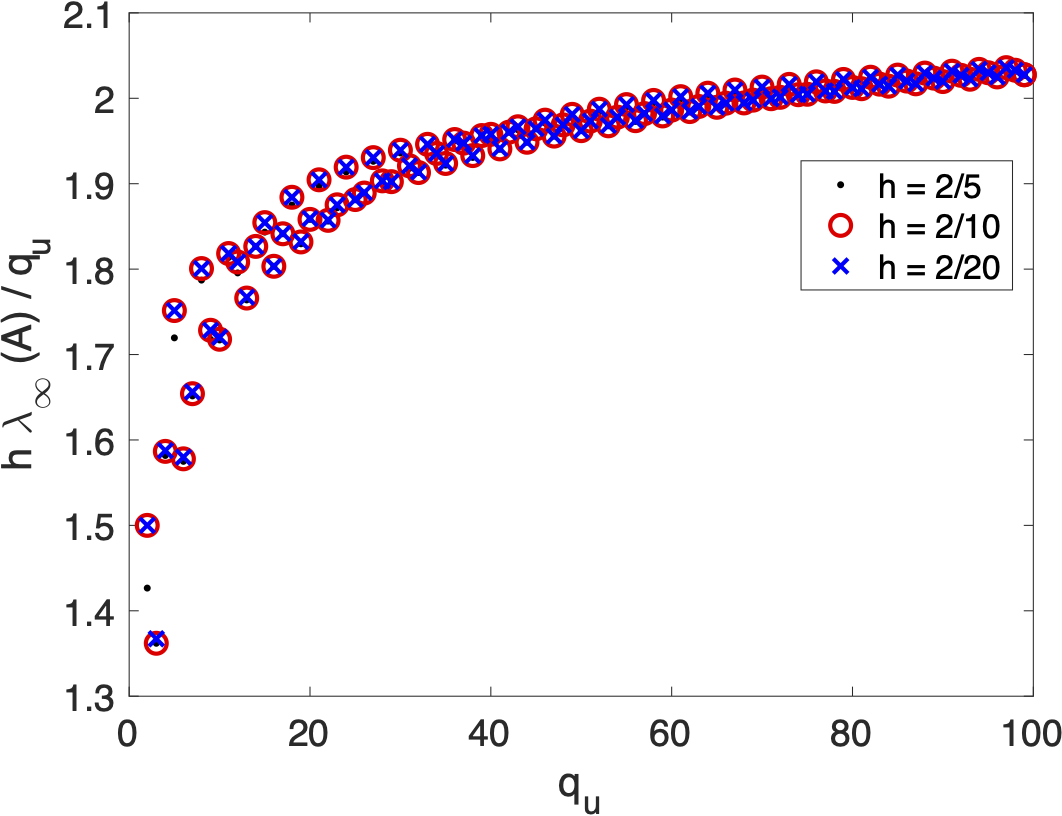}
\includegraphics[width=0.48\textwidth]{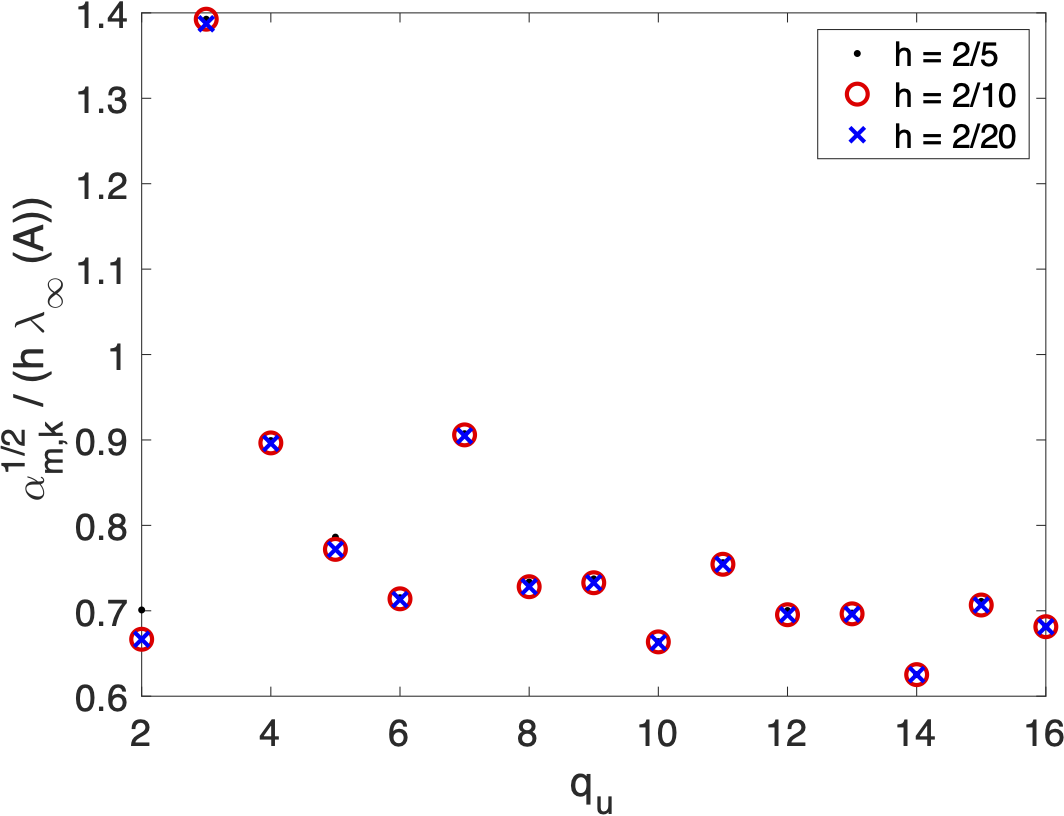}
\caption{The right figure displays the spectral radii of the time stepping matrix $A$ scaled by the element size $h$ and the reciprocal of $q_u$ as a function of $q_u$. The left figure displays ratio of the square root of the diagonal entries in Fig. 2 in \cite{JolyRodriguezLeapFrog} and the the spectral radii of the time stepping matrix $A$ scaled by the element size $h$. See the text for details.\label{fig:spectrum_1d_periodic}}
\end{center}
\end{figure}

\subsubsection{Numerical Investigation of Stability of the Local Timestepping}
\begin{figure}[ht]
	\begin{center} 
		\includegraphics[width=0.49\textwidth]{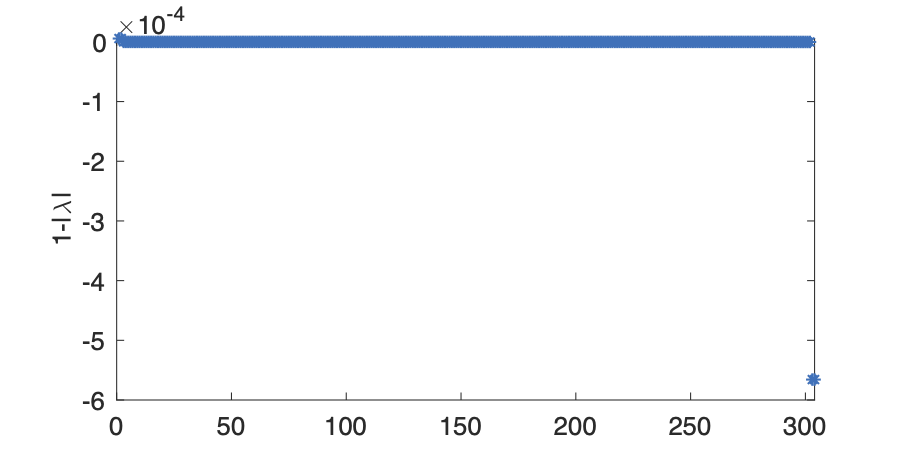}
		\includegraphics[width=0.49\textwidth]{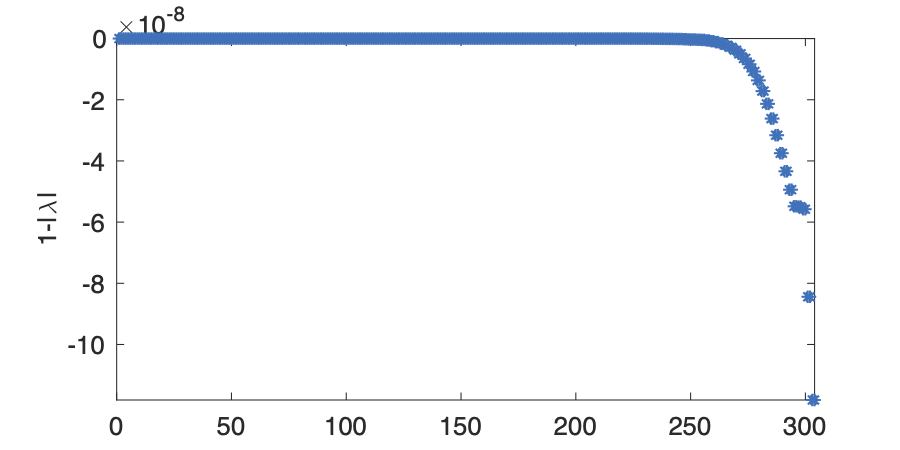} \caption{On the left $1-\arrowvert \lambda_j \arrowvert$, $\{ \lambda_j \}$ the eigenvalues of $B$ from (\ref{onestep}) for $q_u = 14$ and $q_v = 13$ with $m = 2$; to the right $1-\arrowvert \lambda_j \arrowvert$ for $q_u = 14$ with $m = 3$. \label{fig:s_bc_dg}}
	\end{center}
\end{figure}
In this section, the computational domain is chosen  to be $[-1,1.5]$. We impose a homogeneous Neumann boundary condition at the left boundary  and a homogeneous Dirichlet boundary condition at the right boundary. The discretization is carried out on a staggered uniform mesh with mesh size $h$. The process of evolving the solution a full timestep by the local timestepping procedure described above can be expressed as a matrix multiplication 
\begin{equation}
W^h(t_{n+1}) = BW^h(t_{n}). \label{onestep}
\end{equation}
Here, again, $W^h$ is a vector containing the modes describing the element-wise expansions of the displacement and the velocity. The eigenvalues $\lambda$ of the matrix $B$ reveal if a particular discretization is stable. As we use a central flux all the eigenvalues should satisfy $|\lambda| = 1$. In practice, the accuracy of the eigenvalue computation can make it difficult to distinguish if the eigenvalues are strictly smaller than one, equal to one, or slightly larger than one. If the largest eigenvalue is slightly larger than one, say $|\lambda | = 1+\delta$, this may be an indication of an unstable method. However, if $\delta$ is very small and does not change as the mesh is refined the method may still be considered useful even though it cannot be claimed to be stable in a mathematically strict sense.       

We have found that for very high degrees and when the local timestepping is used, the thickness, $m$, of the layer where the local timestepping is used can impact the size of $\delta$.  In this experiment we always set the parameters of the local timestepping as $q_{\rm T} = p = q_u + 1$. 

We first fix the number of DG elements for $u$ to be $10$, i.e., $h = 2.5/10$, and the number of DG elements for $v$ is $11$. The degrees of the approximation spaces for $u$ and $v$ are chosen to be $q_u = 14$ and $q_v = 13$, respectively. The ratio between time step size $\Delta t$ and the mesh size $h$ are fixed as $\frac{\Delta t}{h} = 0.1$.  

In Figure \ref{fig:s_bc_dg} we display $1-|\lambda| = - \delta$ as a function of the eigenvalue index. The left figure is for an overlap with $m=2$. We observe that the modulus of the largest eigenvalue is larger than $1$ by about $6\cdot 10^{-4}$. This would correspond to a magnification of about 2 of an unstable mode after about 1400 time steps, indicating a fast growing instability. The right figure displays the same method except that the overlap is now increased to $m= 3$. Now we find that the modulus of the largest eigenvalue is larger than $1$ by about $10^{-7}$. As this means that it will take around 7 million time steps before this mode is doubled in amplitude it is unlikely that it would show up in any practical computation.  

Importantly, $\delta$ appears to be robust to grid refinement. In Figure \ref{fig:s_bc_dn}, we fix the $m = 3$ and increase the number of DG elements for $u$ from $20$ to 40 and $80$. Again we find that the modulus of the largest eigenvalue is larger than $1$ by about $10^{-7}$ for all three discretizations. 
\begin{figure}[htb]
	\begin{center} 
		\includegraphics[width=0.32\textwidth]{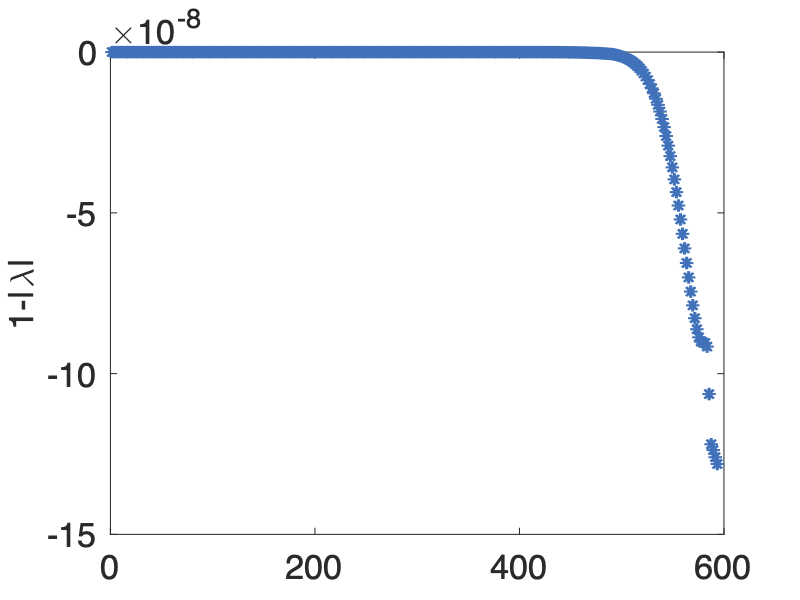}
		\includegraphics[width=0.32\textwidth]{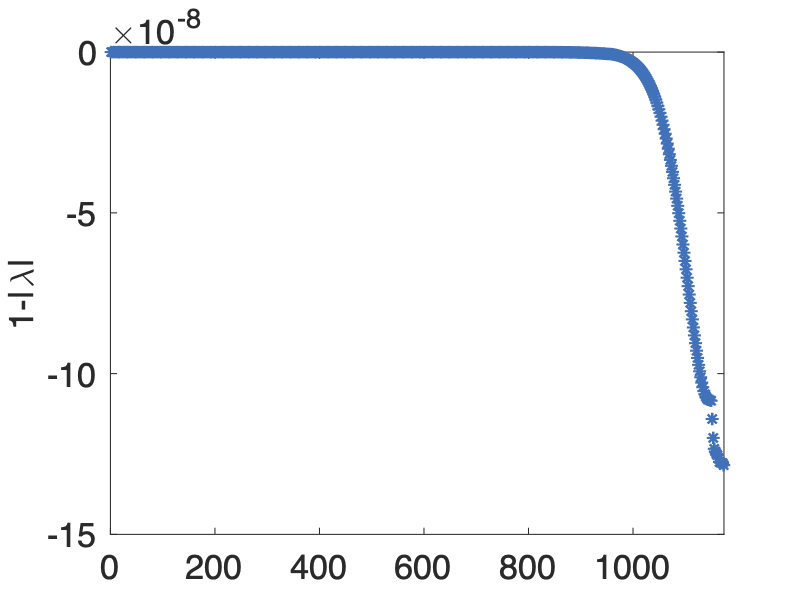}
		\includegraphics[width=0.32\textwidth]{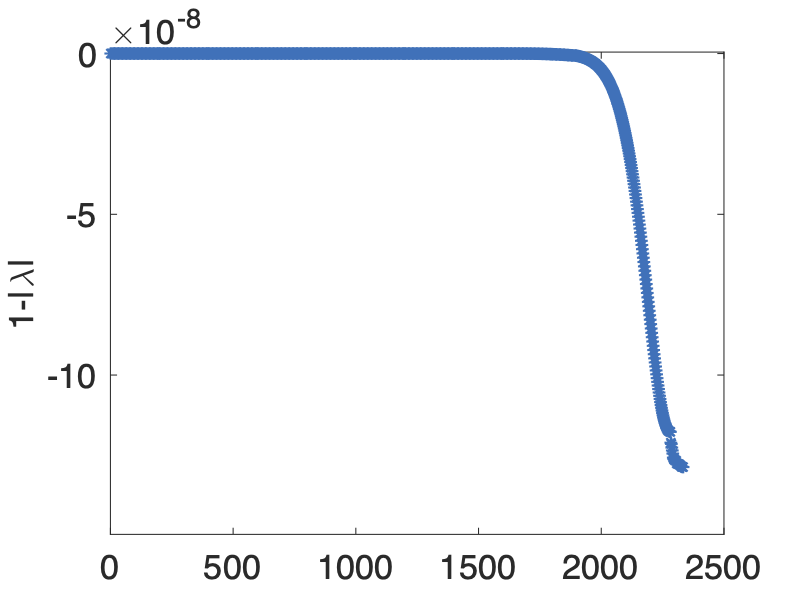} \caption{From left to right are $ 1- \arrowvert \lambda_j \arrowvert$, $\{ \lambda_j \}$ the eigenvalues of $B$ in (\ref{onestep}), for $q_u = 14$ and $q_v = 13$ with $m = 3$ and the number of elements in $\Omega_h$ are $n = 20,40,80$, respectively. \label{fig:s_bc_dn}}
	\end{center}
\end{figure}

\subsection{Convergence in two dimensions with Dirichlet boundary condition}\label{convergence_2d}

In this section, we investigate the convergence of the staggered energy-based DG scheme with the local time stepping of section \ref{sec:timestepping} and variable sound wave speed $c(x,y)$ in two space dimensions. Precisely we solve
\[
\frac{\partial^2 u}{\partial t^2} = \nabla\cdot(c^2(x,y) \nabla u) + f(x,y,t), \ \ \ (x,y)\in[-1,1]\times[-1,1],\ \ \ t>0,
\]
where $c(x,y) = 1 + x^2 + y^2$. Further, we construct a manufactured solution so that 
\begin{align*}
u(x,y,t) &= \sin(\sqrt{k_1^2+k_2^2}\pi t)\sin(k_1\pi x)\sin(k_2\pi y), \\
v(x,y,t) &= \sqrt{k_1^2+k_2^2}\pi\cos(\sqrt{k_1^2+k_2^2}\pi t)\sin(k_1\pi x)\sin(k_2\pi y).
\end{align*}
That is, the initial condition and the external forcing function $f(x,y,t)$ are determined by this manufactured solution. The boundary conditions are homogeneous Dirichlet conditions. To allow for sufficient range to compute the errors we set $k_1 = k_2 = q = 2$  for $q_u = q_v = q = 2,3$, and $k_1 = k_2 = 2q$ for $q_u = q_v = q = 6,7$ with $q$ being the degree of the approximation space for both $u$ and $v$.
\begin{figure}[ht]
\begin{center} 
	\includegraphics[width=0.45\textwidth]{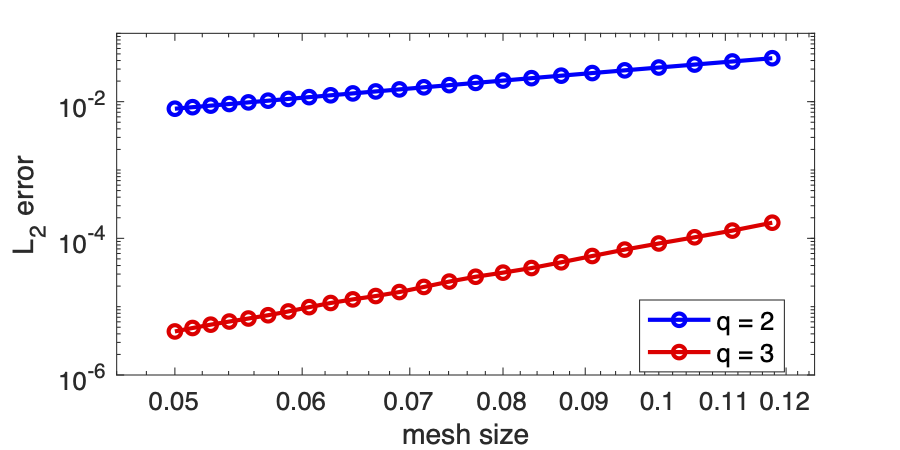}
	\includegraphics[width=0.45\textwidth]{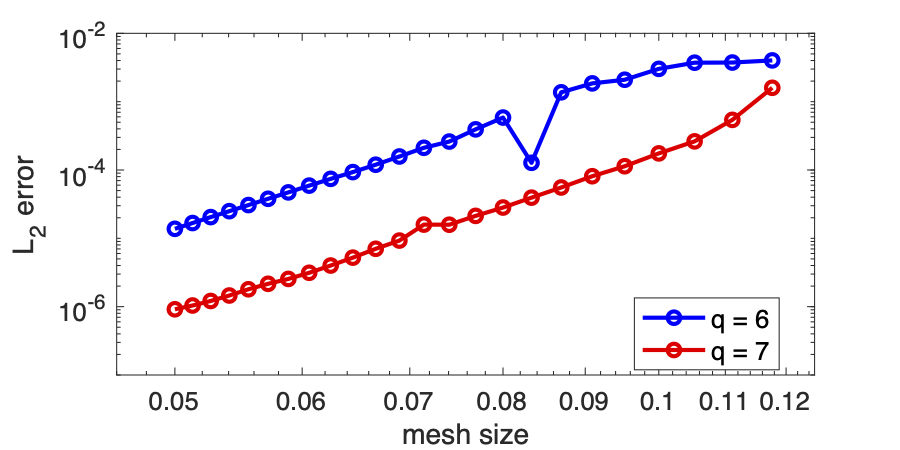}
	\caption{The $L^2$ errors for $u$, from left to right, are for  $q_u = q_v = 2,3$ and $q_u = q_v = 6,7$, respectively. \label{fig:urate_2d}}
\end{center}
\end{figure}

The discretization is performed with staggered elements. The mesh $\Omega^h$ corresponding to the piecewise polynomial approximation to $u$ is Cartesian with vertices given by $x_i = ih$, $y_j = jh$, $i, j = 0,1,\cdots ,n$ with $h = 2/n$. In the interior the elements of $\Omega^{\diamond,h}$ corresponding to the piecewise polynomial approximation to $v$ are staggered with respect to $\Omega^h$; its vertices are $x_{i+1/2}=(i+1/2)h$, $y_{j+1/2}=(j+1/2)h$ . Near the boundaries the elements for $v$ are reduced in size by a factor of 1/2 or 1/4. Then we have $n^2$ elements in $\Omega^h$ and $(n+1)^2$  elements in $\Omega^{\diamond,h}$. Figure \ref{fig:staggered_domain_2d} gives an illustration of the staggered grids with $n = 3$.
\begin{figure}[ht]
\begin{center} 
\includegraphics[width=0.7\textwidth,trim={0.1cm 1.2cm 0.55cm 0.45cm},clip]{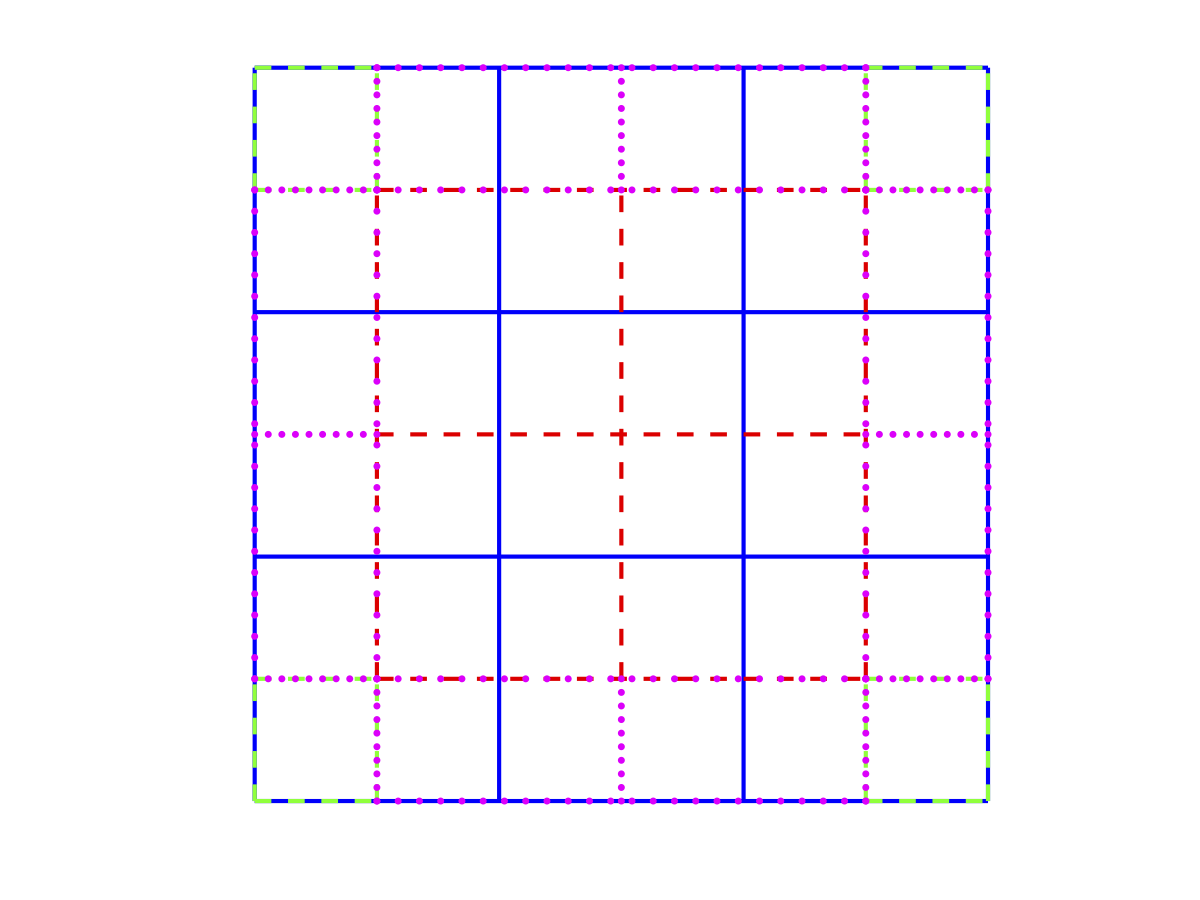}\caption{A staggered grid in two dimensions. Blue boxes are  elements of $\Omega^h$ corresponding to the piecewise approximation to $u$ and non-blue boxes are elements of $\Omega^{\diamond,h}$ corresponding to the piecewise approximation to $v$. Here, we have $3\times3 = 9$ elements in $\Omega^h$ and 4 interior (red boxes), 8 edge (magenta boxes) and 4 corner (green boxes) elements in $\Omega^{\diamond,h}$.  \label{fig:staggered_domain_2d}}
\end{center}
\end{figure}

Here we use the central flux, $\beta=\tau=0$.
We evolve the solution by the local Taylor time stepping described in section \ref{sec:timestepping} with $p=q_{\rm T} = q+1$ and $m = 3$ until the final time $T = 0.5$. The ratios of the time step size $\Delta t$ and mesh size $h$ are set to be $\frac{\Delta t}{h} = 0.1$. 

\begin{table}[htb]
\begin{center}
\begin{tabular}{|c|c c c c |}
\hline
{Degree ($q$) of approx. to $u$ }  & 2 & 3 & 6 & 7  \\
\hline
{Rate fit with C.-flux}&2.00 & 4.27 & 7.21 & 8.13 \\
\cline{1-5}
\end{tabular}
\end{center}
\caption{Linear regression estimates of the convergence rate for $u$ with central flux in two dimensions. The degree of the approximation space for $u$ and $v$ are $q$ for both $x$ and $y$ directions.}
\label{convergence_rate_u_2d}
\end{table}

The $L^2$ errors for $u$ are plotted against the mesh size $h$ in Figure \ref{fig:urate_2d}. Table \ref{convergence_rate_u_2d} presents the linear regression estimates of the convergence rate for $u$ based on the data in Figure \ref{fig:urate_2d}. From Table \ref{convergence_rate_u_2d}, we observe an optimal convergence rate of $q + 1$ when $q = 3,6,7$ and a suboptimal convergence by one for $q = 2$.  

\subsection{Numerical Investigation of the Stability of the Local Time Stepping in Two Dimensions}
\begin{figure}[ht]
	\begin{center} 
		\includegraphics[width=0.45\textwidth]{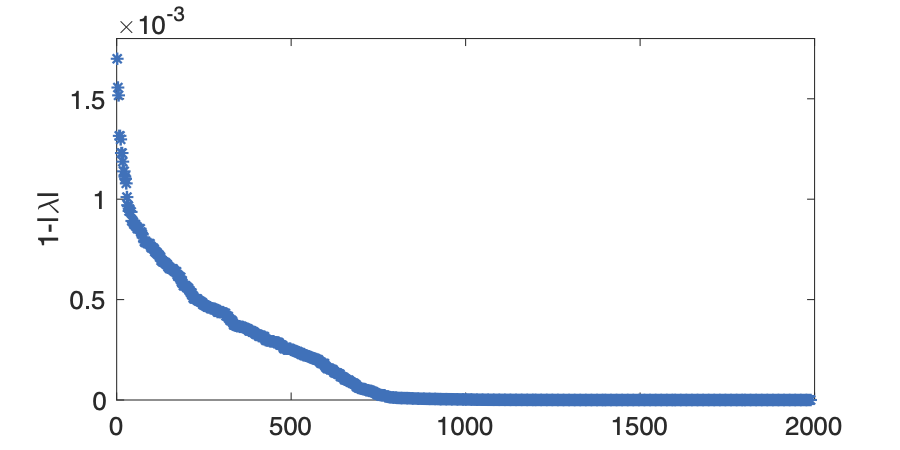}
		\includegraphics[width=0.45\textwidth]{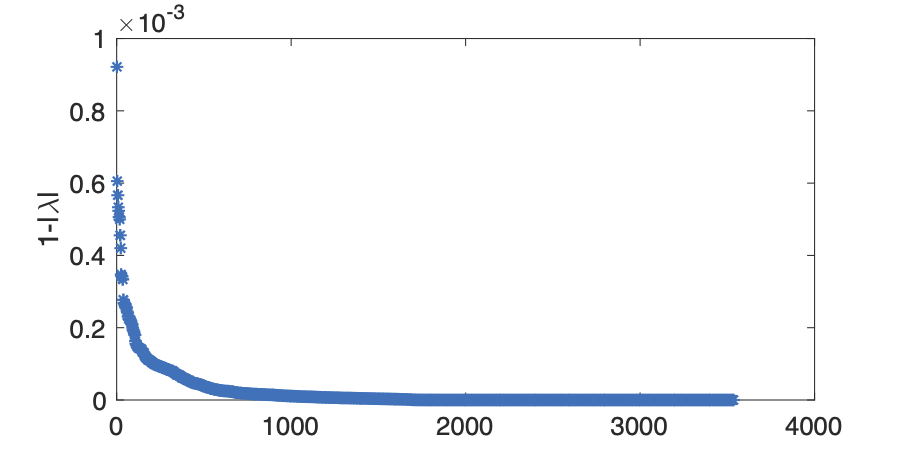}\\
		\includegraphics[width=0.45\textwidth]{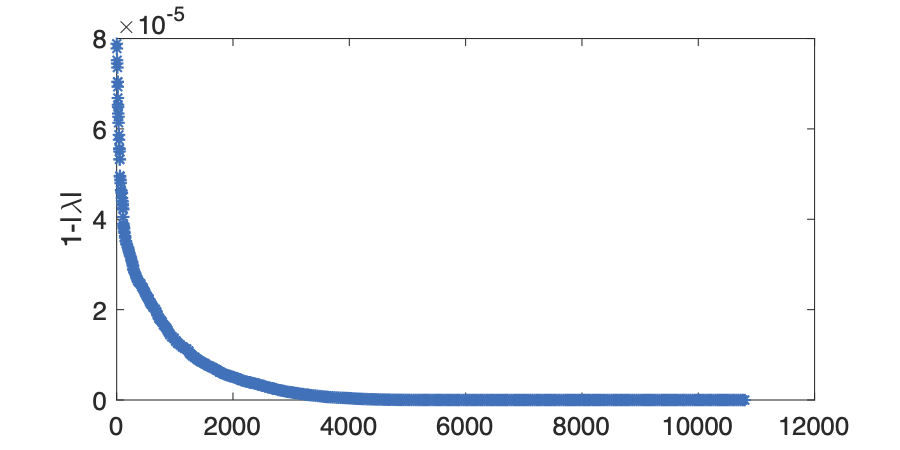}
		\includegraphics[width=0.45\textwidth]{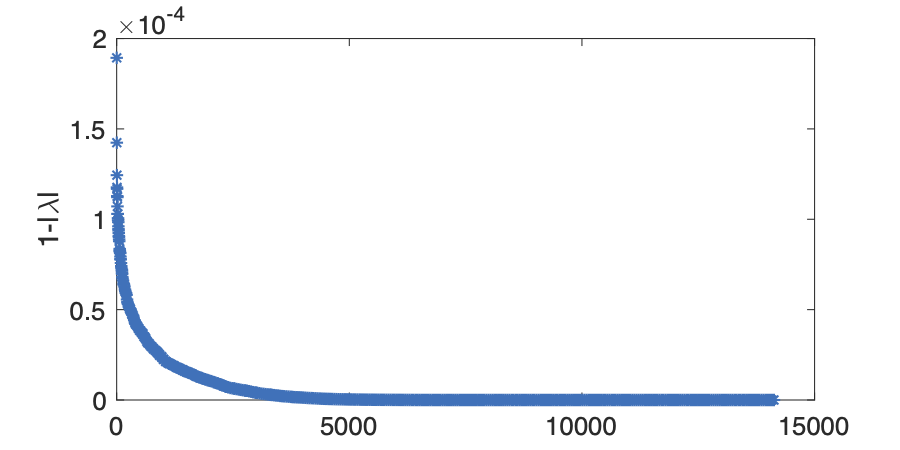} \caption{The top panel, from left to right are $ 1- \arrowvert \lambda_j \arrowvert$, $\{ \lambda_j \}$ the eigenvalues of $B$ in (\ref{onestep}), for the two-dimensional case with $q_u = q_v=2,3$, respectively. The bottom panel displays the same results for $q_u=q_v=6,7$, respectively. \label{fig:s_bc_dq_2d}}
	\end{center}
\end{figure}

In this section, we investigate the stability of the full discretization with the local time stepping in two dimensions with homogeneous Dirichlet boundary conditions. The computational domain is chosen to be the same as above, so is the spatial discretization. Here, we set $n = 10$. Then, the number of elements in $\Omega^h$ is $100$ and in $\Omega^{\diamond,h}$ is $121$. The degree of the approximation space for $u$ and $v$ is $q$ for both. The parameters $p$, $q_{\rm T}$ in the local Taylor time stepping are set to be $p = q_{\rm T} = q+1$ and the overlap is set to $m = 3$.

In Figure \ref{fig:s_bc_dq_2d}, we display $1- | \lambda |$ for the fully discrete method with different values of $q$. The top panel, from left to right, displays the results for $q = 2,3$ and the bottom panel, from left to right displays the results for  $q = 6,7$. Here, we observe that $1- | \lambda |>0$ for all $q$ indicating that these particular discretizations are stable.

\section{Conclusion}

We have shown that, away from boundaries, the use of staggered meshes and suitably chosen numerical fluxes leads to energy-based DG methods for the wave equation with favorable time step stability bounds at high order. In particular, using explicit single step methods built from Taylor polynomials with degrees $q_{\rm T}=4s$, or $q_{\rm T}=4s-1$ and spatial approximations of comparable order we can stably march in time at a fixed, order-independent CFL number. A large global time step can be maintained if local time stepping is used near boundaries. Here we only consider simple geometries, but with local time stepping the proposed method should be applicable in more complex domains containing a sufficiently large volume separated from boundaries. 
\bibliographystyle{plain}
\bibliography{appelo,cdg}

\begin{thebibliography}{10}

\bibitem{Upwind2}
D.~Appel\"o and T.~Hagstrom.
\newblock A new discontinuous {G}alerkin formulation for wave equations in
  second order form.
\newblock {\em {SIAM} Journal On Numerical Analysis}, 53(6):2705--2726, 2015.

\bibitem{secondHermite}
D.~Appel\"{o}, T.~Hagstrom, and A.~Vargas.
\newblock Hermite methods for the scalar wave equation.
\newblock {\em SIAM Journal on Scientific Computing}, 40(6):A3902--A3927, 2018.

\bibitem{BANKS2016310}
J.W. Banks and T.~Hagstrom.
\newblock On {G}alerkin difference methods.
\newblock {\em Journal of Computational Physics}, 313:310--327, 2016.

\bibitem{cockburn1989tvb}
B.~Cockburn and C.-W. Shu.
\newblock {TVB} {R}unge-{K}utta local projection discontinuous {G}alerkin
  finite element method for conservation laws. {II.} {G}eneral framework.
\newblock {\em Mathematics of computation}, 52(186):411--435, 1989.

\bibitem{diaz2009energy}
J.~Diaz and M.~Grote.
\newblock Energy conserving explicit local time stepping for second-order wave
  equations.
\newblock {\em SIAM Journal on Scientific Computing}, 31(3):1985--2014, 2009.

\bibitem{HesthavenWarburton02}
J.~Hesthaven and T.~Warburton.
\newblock Nodal high-order methods on unstructured grids: I. time-domain
  solution of {M}axwell's equations.
\newblock {\em Journal of Computational Physics}, 181:186--221, 2002.

\bibitem{JolyRodriguezLeapFrog}
P.~Joly and J.~Rodr\'iguez.
\newblock Optimized higher order time discretization of second order hyperbolic
  problems: {C}onstruction and numerical study.
\newblock {\em Journal of Computational and Applied Mathematics},
  234:1953--1961, 2010.

\bibitem{ketcheson2015absolute}
D.~Ketcheson, L.~L{\'o}czi, and T.~Kocsis.
\newblock On the absolute stability regions corresponding to partial sums of
  the exponential function.
\newblock {\em IMA Journal of Numerical Analysis}, 35(3):1426--1455, 2015.

\bibitem{kreiss1993stability}
H.-O. Kreiss and L.~Wu.
\newblock On the stability definition of difference approximations for the
  initial boundary value problem.
\newblock {\em Applied Numerical Mathematics}, 12(1-3):213--227, 1993.

\bibitem{liu20082}
Y.~Liu, C.-W. Shu, E.~Tadmor, and M.~Zhang.
\newblock L2 stability analysis of the central discontinuous {G}alerkin method
  and a comparison between the central and regular discontinuous {G}alerkin
  methods.
\newblock {\em ESAIM: Mathematical Modelling and Numerical Analysis},
  42(4):593--607, 2008.

\bibitem{nessyahu1990non}
H.~Nessyahu and E.~Tadmor.
\newblock Non-oscillatory central differencing for hyperbolic conservation
  laws.
\newblock {\em Journal of computational physics}, 87(2):408--463, 1990.

\bibitem{fli}
M.~Reyna and F.~Li.
\newblock Operator bounds and time step conditions for the {DG} and central
  {DG} methods.
\newblock {\em Journal of Scientific Computing}, 62(2):532--554, 2015.

\bibitem{TAMECFL}
T.~Warburton and T.~Hagstrom.
\newblock Taming the {CFL} number for discontinuous {G}alerkin methods on
  structured meshes.
\newblock {\em {SIAM} Journal Numerical Analysis}, 46(6):3151--3180, 2008.

\end{thebibliography}
\end{document}